\author{Levente Bodnár}
\title{Generalized Turán Problem for Complete Hypergraphs}
\begin{document}

\maketitle

\begin{abstract}
    Write $K^{(k)}_{n}$ for the complete $k$-graph on $n$ vertices. For $2 \leq k \leq g < r$ integers, let $\pi\left(n, K^{(k)}_{g}, K^{(k)}_r\right)$ be the maximum density of $K^{(k)}_{g}$ in $n$ vertex $K^{(k)}_{r}$-free $k$-graphs. The main contribution of this paper is the upper bound: $\pi\left(n, K^{(k)}_{g}, K^{(k)}_r\right) \leq \left(1 + O\left(n^{-1}\right) \right)\prod_{m=k}^{g} \left(1 - \frac{\binom{m-1}{k-1}}{\binom{r-1}{k-1}} \right).$ The graph case ($k=2$) is the first known generalized Turán question, investigated by Erdős. The $k=g$ case is the hypergraph Turán problem where the best known general upper bound is by de Caen. The result proved here matches both bounds asymptotically, while any triple $k, g, r$ with $2 < k < g < r$ provides a new upper bound. The proof uses techniques from the theory of flag algebras to derive linear relations between different densities. These relations can be combined with linear algebraic methods. Additionally a simple flag algebraic certificate will be given for $\lim_{n \rightarrow \infty} \pi \left(n, K^{(3)}_4, K^{(3)}_5 \right) = 3/8$.
\end{abstract}

\section{Introduction}

\subsection{Hypergraph Turán Problems}

Given a $k$-graph $H$, write $\pi\left(n, H\right)$ for the maximum density of $k$-uniform edges among $H$-free hypergraphs with size $n$, and let $\pi \left(H \right) = \lim_{n \rightarrow \infty} \pi\left(n, H\right)$. It is known that the limit always exists. Let $K^{(k)}_n$ be the complete $k$-graph with $n$ vertices. A landmark result by Turán determined the values $\pi\left(n, K^{(2)}_r\right)$ exactly, with the unique graphs attaining the maximum.
\begin{theorem}[\cite{turan_original}]
$\pi\left(n, K^{(2)}_r\right)$ is uniquely attained at the balanced complete $(r-1)$-partite graph on $n$ vertices.
\end{theorem}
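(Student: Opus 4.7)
The plan is to apply Zykov-style symmetrization to reduce any extremal $K_r$-free graph to a complete multipartite graph, and then pin down the balanced $(r-1)$-partite structure by strict convexity. Fix an $n$-vertex $K_r$-free graph $G$ with the maximum number of edges.

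First, I would show that non-adjacency is an equivalence relation on $V(G)$. If $u \not\sim v$ with $d(u) > d(v)$, then cloning $u$ over $v$---that is, replacing $v$ by a new vertex $v'$ with $N(v') = N(u)$---produces a graph that remains $K_r$-free (any $K_r$ through $v'$ would transport to a $K_r$ through $u$ in $G$, using $u \not\sim v$) but has strictly more edges, contradicting extremality. Hence $d(u) = d(v)$ whenever $u \not\sim v$. For transitivity, if $u \not\sim v$, $v \not\sim w$, yet $u \sim w$, then all three vertices share a common degree, and simultaneously cloning $v$ over both $u$ and $w$ yields a $K_r$-free graph whose edge count strictly increases, once more contradicting extremality. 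So $G$ must already be a complete multipartite graph with parts $V_1,\ldots,V_s$ of sizes $n_1,\ldots,n_s$.

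Second, the $K_r$-freeness forces $s \leq r-1$. The identity
\[
e(G) \;=\; \binom{n}{2} - \sum_{i=1}^{s} \binom{n_i}{2},
\]
together with the strict convexity of $x \mapsto \binom{x}{2}$ (and the observation that increasing $s$ to $r-1$ by splitting an oversized part strictly decreases $\sum \binom{n_i}{2}$), shows $e(G)$ is uniquely maximized when $s = r-1$ and the part sizes pairwise differ by at most one; this is exactly the balanced complete $(r-1)$-partite graph.

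The step I expect to be the main obstacle is the careful bookkeeping of strict inequalities throughout the symmetrization so that \emph{uniqueness}---not merely equality of extremal edge counts---is obtained. Each symmetrization operation must be shown to \emph{strictly} increase edges whenever the equivalence-relation conditions fail; this is what forces $G$ itself, rather than some graph reachable from it, to have the balanced $(r-1)$-partite structure. A secondary subtlety is ruling out degenerate balanced configurations with $s < r-1$, which requires the convexity argument to be run simultaneously over the choice of $s$ and the distribution of $n_i$.
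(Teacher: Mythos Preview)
The paper does not supply its own proof of this statement: it is quoted as Tur\'an's classical theorem with a citation to \cite{turan_original} and is used only as background. There is therefore nothing in the paper to compare your argument against.

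That said, your proposal is the standard Zykov symmetrization proof and is correct as outlined. The two places you flag as potential obstacles are exactly the right ones, and both are routine: the cloning steps do give \emph{strict} edge increases (by $d(u)-d(v)>0$ in the first case and by $2d(v)-(d(u)+d(w)-1)=1$ in the second), which secures uniqueness; and the convexity argument does force $s=r-1$ since splitting any part of size at least $2$ strictly decreases $\sum_i\binom{n_i}{2}$. One small wording point: in the transitivity step you should note explicitly that $u',w',v$ are pairwise non-adjacent after cloning (because $u,w\notin N(v)$ and $v\notin N(v)$), so any hypothetical $K_r$ contains at most one of them and can be pulled back to a $K_r$ in $G$.
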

Following this, Erdős and Stone found more generally the value $\pi(H)$ for all graph $H$.
\begin{theorem}[\cite{erdos_stone}] Suppose $H$ is a graph with chromatic number $\chi(H)$, then 
\begin{equation*}
    \pi(H) = 1 - \frac{1}{\chi(H)-1}.
\end{equation*}
\end{theorem}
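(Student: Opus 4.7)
The lower bound is immediate from Turán's theorem quoted above. The Turán graph $T_{r-1}(n)$ with $r = \chi(H)$ is $(r-1)$-chromatic and therefore cannot contain a copy of $H$, which requires $r$ colours; so it is $H$-free, and Turán's theorem tells us its edge density tends to $1 - 1/(r-1) = 1 - 1/(\chi(H)-1)$. Hence $\pi(H) \geq 1 - 1/(\chi(H)-1)$.

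For the matching upper bound I would reduce to a statement about complete multipartite graphs via the observation that $H$ embeds into the complete balanced $r$-partite graph $K_r(t, t, \ldots, t)$ whenever $r = \chi(H)$ and $t \geq |V(H)|$: take a proper $r$-colouring of $H$ and pad each colour class to size $t$. Consequently it suffices to show that for every $r \geq 2$, $t \geq 1$, and $\varepsilon > 0$, any graph on $n$ vertices with more than $\left(1 - \frac{1}{r-1} + \varepsilon\right)\binom{n}{2}$ edges contains $K_r(t, \ldots, t)$ once $n$ is large enough. I would prove this by induction on $r$. The base $r = 2$ is the Kővári–Sós–Turán theorem: any graph of positive edge density contains $K_{t,t}$. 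For the inductive step, apply the hypothesis with a much larger parameter $s \gg t$ to extract a copy of $K_{r-1}(s, \ldots, s)$; then, by an averaging argument on the remaining vertices, show that a positive proportion of them have at least $t$ neighbours in each of the $r-1$ parts, and apply pigeonhole across choices of which $t$-subset they dominate in each part to find $t$ such vertices sharing a common $t$-subset in each part, yielding $K_r(t, \ldots, t)$.

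The main obstacle is the quantitative bookkeeping in the inductive step: $s$ must be taken enormous compared to $t$ so that the pigeonhole over $r-1$ parts still leaves many vertices aligned with a common configuration, while the $\varepsilon$-slack in the density must be preserved through each level of the induction. A supersaturation refinement of the inductive hypothesis — producing not a single copy of $K_{r-1}(s, \ldots, s)$ but a positive fraction of all such potential copies — will almost certainly be needed, so that the density loss incurred by the averaging and pigeonhole steps can be absorbed as one descends from $r$ to $r-1$.
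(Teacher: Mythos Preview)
The paper does not prove this theorem at all; it is quoted in the introduction as a classical result of Erd\H{o}s and Stone, with only a citation and no argument. So there is no ``paper's own proof'' to compare your proposal against.

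As an independent assessment: your sketch is a recognisable outline of a standard modern proof of Erd\H{o}s--Stone. The lower bound via the Tur\'an graph is correct as stated. For the upper bound, the reduction to finding $K_r(t,\dots,t)$ is right, and the induction on $r$ with base case K\H{o}v\'ari--S\'os--Tur\'an is a common route. The one place where you hedge unnecessarily is the claim that a supersaturation version of the inductive hypothesis ``will almost certainly be needed.'' In fact the plain inductive statement suffices: since $1 - \tfrac{1}{r-1} + \varepsilon > 1 - \tfrac{1}{r-2}$, the inductive hypothesis already applies with positive slack $\varepsilon' = \varepsilon + \tfrac{1}{(r-1)(r-2)}$, so you can extract a single $K_{r-1}(s,\dots,s)$ for $s$ as large as you like; a direct counting argument on the edges between this copy and the remaining vertices then shows that a positive proportion of outside vertices have at least $t$ neighbours in each part, and pigeonhole over the $\binom{s}{t}^{r-1}$ possible neighbourhood patterns finishes it, provided $s$ was chosen large enough relative to $t$ and $\varepsilon$. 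Supersaturation is a cleaner alternative but not a logical necessity.
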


The corresponding question, when $k>2$, is still open and seems to be much more difficult. There are sporadic results for various $k$-graphs, but no $\pi\left(K^{(k)}_r\right)$ value is known. The best general upper bound comes from de Caen.
\begin{theorem}[\cite{de_caen_bound}]
\begin{equation*}
    \pi\left(n, K^{(k)}_r\right) \leq 1 - \left( 1 + \frac{r-k}{n-r+1} \right) \frac{1}{\binom{r-1}{k-1}}.
\end{equation*}
\end{theorem}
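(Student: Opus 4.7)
The natural strategy is a double counting argument. Let $H$ be a $K^{(k)}_r$-free $k$-graph on $n$ vertices with $e$ edges, and denote by $\bar{e} = \binom{n}{k} - e$ its number of non-edge $k$-subsets. As a baseline, every $r$-subset $R$ of $V(H)$ must contain at least one non-edge, for otherwise $H[R] \cong K^{(k)}_r$. Counting incidences between $r$-subsets of $V(H)$ and the non-edges they contain yields
\[
\binom{n}{r} \leq \bar{e} \cdot \binom{n-k}{r-k},
\]
hence $\bar{e} \geq \binom{n}{k}/\binom{r}{k}$. This already has the correct order but is off by a factor of $r/k$ from the claimed bound.

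To bridge this gap I would refine the count in two stages. First, I would shift from $r$-subsets to pairs $(Q, v)$ with $Q \in \binom{V}{r-1}$ and $v \in V \setminus Q$, splitting each event ``$Q \cup \{v\}$ contains a non-edge'' into contributions lying inside $Q$ versus contributions containing $v$. Second, I would anchor $Q$ at a distinguished $(k-1)$-subset $S \subseteq Q$ and bring in the link degrees $d(S) = |\{v : S \cup \{v\} \in E(H)\}|$, so as to control how often each non-edge is forced. Summing the resulting local inequalities over all $(k-1)$-subsets $S$ and applying Jensen's inequality to $\sum_S d(S) = k e$ should convert the ``$r$-subset'' denominator $\binom{r}{k}$ into the ``$(k-1)$-subset link'' denominator $\binom{r-1}{k-1}$, thereby producing de Caen's bound together with its $(1 + (r-k)/(n-r+1))$ correction.

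The main obstacle is precisely this improvement from the trivial $r$-subset count by a factor of $r/k$: no direct counting of pairs $(R, F)$ with $F$ a non-edge in $R$ can capture it, since each non-edge is symmetrically placed in its $\binom{n-k}{r-k}$ containing $r$-sets. The improvement has to come from an asymmetric count exploiting link structure at $(k-1)$-subsets---essentially using that each $(k-1)$-set $S$ with link $L(S)$ of size $d(S)$ forces many non-edges in the sub-hypergraph on $L(S)$, because $S$ together with any $(r-k+1)$-subset of $L(S)$ must contain a non-edge not of the form $S \cup \{w\}$. Averaging this local effect over $S$ and applying convexity should drive the constant down to $\binom{r-1}{k-1}^{-1}$. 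Once the proof is set up correctly, the finite-$n$ correction term emerges from carefully tracking the binomial identity $(n-r+1)\binom{n-k}{r-1-k} = (r-k)\binom{n-k}{r-k}$ that governs how non-edges are double-counted across the two cases.
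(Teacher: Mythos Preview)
The paper does not prove this theorem. It is quoted in the introduction as de Caen's result from \cite{de_caen_bound}, with no proof supplied; the only information the paper gives about de Caen's argument is the remark that it is based on the nonlinear recursion
\[
f_{m+1} \;\geq\; \frac{m^2 f_m}{(m-k+1)(n-m)}\left(\frac{f_m(n-m+1)}{f_{m-1}\,m}-\frac{(k-1)(n-m)+m}{m^2}\right),
\]
where $f_m=d(K_m^{(k)},G)$. So there is nothing in the paper to compare your proposal against directly. What the paper \emph{does} prove is its own Theorem~\ref{main_theorem}, which for $g=k$ yields an upper bound on $\pi(n,K_r^{(k)})$ with the same leading term $1-\binom{r-1}{k-1}^{-1}$ but a different (and slightly weaker) finite-$n$ correction. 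That proof goes through the flag-algebra inequality of Lemma~\ref{main_lemma}, i.e.\ through the squares $\llbracket (K_m^{T_{m-1}}-xT_{m-1})^2\rrbracket$, and not through any link-degree double count.

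As for your sketch itself: the baseline count is correct, and the diagnosis that one must gain a factor $r/k$ by passing from symmetric $r$-set counting to an asymmetric count anchored at $(k-1)$-subsets is the right instinct. But the middle of the argument is not yet a proof. The step ``$S$ together with any $(r-k+1)$-subset of $L(S)$ must contain a non-edge not of the form $S\cup\{w\}$'' is true, but you have not said how to turn that into an inequality that, after summing over $S$ and applying convexity to $\sum_S d(S)=ke$, actually produces $\bar e \ge \binom{n}{k}\binom{r-1}{k-1}^{-1}(1+\tfrac{r-k}{n-r+1})$. In de Caen's own argument the convexity (Cauchy--Schwarz) is applied one level up, to the quantities $f_{m-1},f_m,f_{m+1}$ rather than to raw link sizes, and the bound emerges after iterating the displayed recursion from $m=k$ to $m=r-1$. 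Your plan may well be completable, but the crucial inequality---the one doing the work of the $r/k$ improvement---is asserted rather than derived, so as written this is a strategy outline rather than a proof.
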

For an extensive survey, focusing on the $\pi\left(n, K^{(k)}_r\right)$ problem, with various lower and upper bounds, see \cite{sido_survey}. More recent coverage of the question with different $k$-graphs can be found in \cite{keevash_survey}.

\subsection{Generalized Turán Problems}

As a possible generalization of the Turán question, one can ask the maximum density of a given $k$-graph $F$,  instead of the $k$-edges. For $F, H$ given $k$-graphs, write $\pi(n, F, H)$ for the maximum density of $F$ among $n$ sized $H$-free $k$-graphs and use $\pi (F, H) = \lim_{n \rightarrow \infty} \pi(n, F, H)$. For complete graphs, this was initially investigated by Erdős \cite{erdos_generalized_1, erdos_generalized_2}.
\begin{theorem}[\cite{erdos_generalized_1}]
For $2 \leq g < r$ integers $\pi\left(n, K^{(2)}_g, K^{(2)}_r\right)$ is uniquely attained at the balanced complete $(r-1)$-partite graph on $n$ vertices.
\end{theorem}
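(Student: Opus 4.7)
The plan is to prove this via Zykov symmetrization, reducing the problem to complete multipartite graphs, and then finishing by a convexity computation on elementary symmetric polynomials. Write $T_{r-1}(n)$ for the balanced complete $(r-1)$-partite graph on $n$ vertices.

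For the symmetrization step, take a $K^{(2)}_r$-free graph $G$ on $n$ vertices and for each vertex $x$ let $N_g(x)$ denote the number of copies of $K^{(2)}_g$ through $x$. Given a non-adjacent pair $u \neq v$, no $K^{(2)}_g$ contains both of them, so the total $K^{(2)}_g$-count of $G$ is $A + N_g(u) + N_g(v)$, where $A$ counts the copies avoiding $\{u,v\}$. Assuming $N_g(u) \geq N_g(v)$, form $G'$ by deleting $v$ and inserting a new vertex $v'$ with the same neighborhood as $u$; this $G'$ remains $K^{(2)}_r$-free because any $K^{(2)}_r$ in $G'$ avoids one of the non-adjacent pair $\{u, v'\}$ and pulls back to a clique of the same size in $G$, and its $K^{(2)}_g$-count is $A + 2N_g(u) \geq A + N_g(u) + N_g(v)$. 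Iterating, ``non-adjacent or equal'' eventually becomes an equivalence relation on $V(G)$, so the resulting graph is complete multipartite.

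For the convexity step, a complete $s$-partite graph on $n$ vertices with parts of sizes $n_1, \ldots, n_s$ is $K^{(2)}_r$-free iff $s \leq r - 1$, and contains $e_g(n_1, \ldots, n_s)$ copies of $K^{(2)}_g$, where $e_g$ is the $g$-th elementary symmetric polynomial. A standard smoothing shows that over positive partitions of $n$ into at most $r-1$ parts, $e_g$ is uniquely maximized by the balanced $(r-1)$-partition: splitting a part of size $\geq 2$ into nonempty pieces $a, b$ changes $e_g$ by $ab \cdot e_{g-2}$ of the remaining parts, and transferring a vertex between parts of sizes $a$ and $b$ with $a - b \geq 2$ changes $e_g$ by $(a - b - 1) \cdot e_{g-2}$ of the remaining parts, both strictly positive once the number of parts reaches $g$ (guaranteed by $g \leq r - 1$). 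Hence $T_{r-1}(n)$ is the unique maximum among complete multipartite $K^{(2)}_r$-free graphs, giving the desired upper bound on $\pi(n, K^{(2)}_g, K^{(2)}_r)$.

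The main obstacle is uniqueness: symmetrization only gives the inequality $\leq$, and when $N_g(u) = N_g(v)$ for a non-adjacent pair the swap is non-strict, so one cannot directly conclude that an extremal $G$ is already complete multipartite. I would argue that every non-adjacent pair in an extremal $G$ must satisfy $N_g(u) = N_g(v)$, so successive equality-symmetrizations produce a sequence of $K^{(2)}_r$-free graphs with the same $K^{(2)}_g$-count, terminating in $T_{r-1}(n)$ by the convexity step. A careful reversal argument, exploiting $g < r$ to certify that an equality-symmetrization can preserve the extremal count only if it is a graph isomorphism, would then show that $G$ itself was isomorphic to $T_{r-1}(n)$. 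This reversal is where the details get most delicate, and it is the main technical point I anticipate having to handle with care.
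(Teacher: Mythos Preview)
The paper does not prove this statement: it is quoted in the introduction as a classical result of Erd\H{o}s (\cite{erdos_generalized_1}) and serves only as background. The paper's own contribution, \cref{main_theorem}, specializes at $k=2$ to an asymptotic upper bound that matches the value attained by $T_{r-1}(n)$, but the paper neither claims nor establishes the exact finite-$n$ value or the uniqueness of the extremal graph. So there is no ``paper's own proof'' to compare against here.

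Your outline via Zykov symmetrization followed by optimization of $e_g(n_1,\dots,n_s)$ over integer partitions is the standard route to this theorem, and the inequality part is essentially correct as you have it. Two small things to tidy. In the splitting/balancing step, be precise about when $e_{g-2}$ of the \emph{remaining} parts is strictly positive: for a transfer between two parts there are $s-2$ other parts, so you need $s\ge g$; for a split of one part there are $s-1$ others, so $s\ge g-1$ suffices. In particular, to get uniqueness of the partition one should first argue that any optimal complete multipartite graph must have exactly $r-1$ nonempty parts (otherwise a split yields a nonstrict improvement until $s\ge g$, and a strict one thereafter), and only then apply the transfer argument. For the symmetrization uniqueness, the cleanest way to avoid the ``reversal'' you worry about is the usual structural observation: if $G$ is extremal then for every non-edge $uv$ one has $N_g(u)=N_g(v)$, and in fact the same $K_g$-count is preserved under cloning in either direction; from this one deduces directly that non-adjacency is transitive on $V(G)$ (else a single clone would produce a strictly larger or a $K_r$-containing graph), hence $G$ is already complete multipartite, and the convexity step finishes. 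This avoids reversing a sequence of symmetrizations.
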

Note that this gives asymptotically that $\pi\left(K^{(2)}_g, K^{(2)}_r\right) = \prod_{m=2}^{g} \left(1 - \frac{m-1}{r-1} \right)$. The generalized Turán problem for graphs was systematically investigated by Alon and Shikhelman, obtaining a result similar to Erdős-Stone.
\begin{theorem}[\cite{alon_generalized_erdos_stone}]
For any graph $H$, with chromatic number $\chi(H)$, the following holds 
\begin{equation*}
    \pi\left(K^{(2)}_g, H\right) = \prod_{m=2}^{g} \left(1 - \frac{m-1}{\chi(H)-1} \right).
\end{equation*}
\end{theorem}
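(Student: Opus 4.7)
The plan is to prove matching lower and upper bounds; write $\chi := \chi(H)$. For the lower bound, consider the Turán graph $T_{n,\chi-1}$, the balanced complete $(\chi-1)$-partite graph on $n$ vertices. Since $\chi(T_{n,\chi-1}) = \chi - 1 < \chi(H)$, it is $H$-free, and every $K^{(2)}_g$ subgraph must use exactly one vertex from each of $g$ distinct parts, giving
$$\binom{\chi-1}{g}\left(\frac{n}{\chi-1}\right)^{g}(1+o(1)) = (1+o(1))\prod_{m=2}^{g}\left(1-\frac{m-1}{\chi-1}\right)\binom{n}{g}$$
copies of $K^{(2)}_g$, which matches the claim.

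For the upper bound I would first reduce to the clique case $H = K^{(2)}_\chi$. Since $H$ is $\chi$-chromatic, it embeds into a balanced blow-up $K^{(2)}_\chi(|V(H)|,\ldots,|V(H)|)$, so by a standard Erdős-Simonovits supersaturation argument, for each $\varepsilon > 0$ there exists $\delta > 0$ such that any graph on $n \geq n_0$ vertices containing more than $\delta n^{\chi}$ copies of $K^{(2)}_\chi$ already contains $H$. Hence an $H$-free graph $G$ contains at most $\delta n^{\chi}$ copies of $K^{(2)}_\chi$, and the graph removal lemma applied to $K^{(2)}_\chi$ lets us delete at most $\varepsilon n^{2}$ edges of $G$ to obtain a $K^{(2)}_\chi$-free subgraph $G'$. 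Each deleted edge lies in at most $\binom{n-2}{g-2}$ copies of $K^{(2)}_g$, so the $K^{(2)}_g$-count decreases by at most $O(\varepsilon n^{g})$.

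To bound the $K^{(2)}_g$-count of the $K^{(2)}_\chi$-free graph $G'$, I would use Zykov symmetrisation weighted by $K_g$-containment: for a non-adjacent pair $u, v$ with at least as many $K_g$'s through $v$ as through $u$, replace $N(u)$ by $N(v)$. A short case check shows this preserves $K^{(2)}_\chi$-freeness (any new $K_\chi$ through $u$ pulls back to one through $v$ in $G'$) and does not decrease the $K_g$ count. At a fixed point the graph is complete multipartite with at most $\chi - 1$ parts of sizes $n_1,\ldots,n_t$, so its $K_g$-count equals the elementary symmetric polynomial $e_g(n_1,\ldots,n_t)$. By Schur-concavity (equivalently, Maclaurin's inequality) this is maximised at the balanced partition $t = \chi - 1$, $n_i = n/(\chi - 1)$, producing the upper bound $(1 + o(1))\prod_{m=2}^{g}(1 - (m-1)/(\chi-1))\binom{n}{g}$. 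Sending $\varepsilon \to 0$ closes the $O(\varepsilon n^g)$ gap from the reduction step.

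The main obstacle is the first stage: to ensure that destroying every $K_\chi$ in $G$ costs only $o(n^g)$ copies of $K_g$, one needs both supersaturation (to bound the total number of $K_\chi$'s in $G$) and the graph removal lemma (to make the deletions cheap), both of which ultimately rest on the regularity lemma. A more elementary approach might analyse the $K_g$-hypergraph of $G$ directly via hypergraph regularity, but the two-stage reduction above is the shortest route to the stated asymptotic identity.
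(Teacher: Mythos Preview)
The paper does not contain a proof of this statement: it is quoted in the introduction as a background result of Alon and Shikhelman (the citation \cite{alon_generalized_erdos_stone}), with no argument given. There is therefore nothing in the paper to compare your attempt against.

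On its own merits your outline is correct and is essentially the standard route. The lower bound via the Tur\'an graph $T_{n,\chi-1}$ is immediate. For the upper bound, your two-step reduction is the right idea: supersaturation (an $H$-free graph has $o(n^{\chi})$ copies of $K_{\chi}$, since $H$ embeds in a blow-up $K_{\chi}(t)$ and positive $K_{\chi}$-density forces such a blow-up) followed by the $K_{\chi}$-removal lemma lets you pass to a $K_{\chi}$-free graph at a cost of $o(n^{g})$ copies of $K_{g}$. One bookkeeping point: the $\varepsilon,\delta$ in the removal lemma and in the supersaturation step are not the same pair; you should first fix $\varepsilon$ for the removal lemma, obtain the corresponding $\delta$, and then invoke supersaturation to ensure the $K_{\chi}$-count is below $\delta n^{\chi}$ for large $n$. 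The Zykov symmetrisation step is fine (your check that a new $K_{\chi}$ through $u$ pulls back to one through $v$ is exactly what is needed, and $u\not\sim v$ persists since $v\notin N(v)$), and the optimisation over complete multipartite graphs with at most $\chi-1$ parts reduces to the Schur-concavity of $e_{g}$ on the simplex, which indeed forces the balanced partition.
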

In addition, \cite{alon_generalized_erdos_stone} investigates degenerate generalized Turán questions -- the rate of convergence of $\pi(n, F, H)$ when $\pi(F, H) = 0$. \cite{deg_gen_hyp_turan} finds various bounds for several degenerate generalized hypergraph Turán problems.

The generalized Turán problem for complete $k$-graphs corresponds with the separation of different layers of the boolean hypercube using a $k$-CNF. This idea appears for example in \cite{application_sidorenko} and will be further explored in a different paper. \cite{application_vcdim} gives new insights into the set of satisfying assignments of CNFs using a variant of the VC dimension. Bounds on this variant of the VC dimension turn out to be equivalent to a generalized Turán-type conjecture.

\subsection{Flag Algebras}
The theory of flag algebras \cite{razb_flag_algebras} provides a systematic approach to studying extremal combinatorial problems and the tools available for solving them. It gives a common ground for combinatorial ideas, by expressing them as linear operators, acting between flag algebras. Linearity means the different techniques can be easily combined with linear programming/linear algebra.

A large part of the theory can be automated with state-of-the-art optimization algorithms, providing spectacular improvements in density bounds. There has been significant progress in the famous tetrahedron problem \cite{de_caen_bound, tetra_previous} with the previous best bound being $\pi\left( K^{(3)}_4 \right) \leq \frac{3+\sqrt{7}}{12} < 0.59360$ while flag algebraic calculations improved it to the following bound: \begin{theorem}[\cite{razb_4_vertex} verified in \cite{flagmatic, baber_thesis}]
$\pi\left( K^{(3)}_4 \right) \leq 0.56167.$
\end{theorem}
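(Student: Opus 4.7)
The plan is to derive the bound via the flag algebra semidefinite method, working inside the flag algebra $\mathcal{A}$ of $K^{(3)}_4$-free $3$-graphs. Writing $e$ for the density of the single nontrivial $3$-vertex flag, the goal is to exhibit a certificate of the form $0.56167 - e = S + o(1)$, where $S$ is manifestly nonnegative in the asymptotic flag algebra semantics. Since any element of $\mathcal{A}$ can be written uniquely as a linear combination of densities of the $K^{(3)}_4$-free $3$-graphs on $N$ labeled vertices (Razborov used $N=7$), the problem of verifying such an identity reduces to checking finitely many coefficient equations once a suitable $S$ is found.

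First I would enumerate the base flags $\mathcal{F}_N$. Next I would list all relevant types $\sigma$ (small partially labeled $3$-graphs) and, for each $\sigma$, enumerate the $\sigma$-flags $F^{\sigma}_1, \dots, F^{\sigma}_{m_\sigma}$ of the appropriate intermediate size so that, after squaring and averaging over labelings, their products land inside $\mathcal{A}_N$. Razborov's key positivity principle states that for every positive semidefinite $Q^\sigma \succeq 0$, the unlabeling
\begin{equation*}
    \Big[\!\!\Big[ \sum_{i,j} Q^\sigma_{ij}\, F^\sigma_i F^\sigma_j \Big]\!\!\Big]_\sigma \geq 0
\end{equation*}
holds as an asymptotic inequality in $\mathcal{A}$. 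Taking $S$ to be the sum of such quantities over all types and expanding in the basis $\mathcal{F}_N$ turns the desired inequality $0.56167 - e - S \geq 0$ into a semidefinite feasibility problem: find PSD matrices $\{Q^\sigma\}$ so that the resulting linear combination of base-flag densities has all coefficients nonpositive and constant term at most $0.56167$.

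I would then hand this SDP to a numerical solver, which after suitable symmetry reduction (exploiting the automorphism groups of the types and block-diagonalizing $Q^\sigma$ accordingly) becomes tractable. The main obstacle is the final rounding step: converting the floating-point near-optimal solution into an exact rational PSD certificate. Naive rationalization almost always breaks positive semidefiniteness, so one must project the numerical solution onto the affine subspace dictated by the equality constraints, perturb the eigenvalues away from zero, and verify PSD-ness in exact arithmetic — this is precisely the workflow implemented in \cite{flagmatic} and \cite{baber_thesis}, which together certify the value $0.56167$ rigorously.
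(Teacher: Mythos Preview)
The paper does not prove this theorem at all; it is quoted in the introduction as a background result, with the proof residing entirely in the cited references \cite{razb_4_vertex, flagmatic, baber_thesis}. So there is no ``paper's own proof'' to compare against.

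Your proposal is an accurate high-level description of exactly the flag algebra semidefinite method those references employ: enumerate $K^{(3)}_4$-free flags on $N$ vertices, set up the SDP over the PSD matrices $\{Q^\sigma\}$ indexed by types, solve numerically, and round to an exact rational certificate. That is the right methodology and nothing you wrote is wrong. The only caveat is that what you have written is a \emph{plan} rather than a \emph{proof}: the mathematical content of this theorem is the explicit certificate --- the list of types, the rational PSD matrices, and the coefficient-by-coefficient verification over the base flags --- and you neither exhibit it nor point to a file containing it. For a computer-assisted result of this kind that is unavoidable in a short write-up, but you should be explicit that the argument is incomplete without the accompanying data and its machine-checked verification.
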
 Note that the best known lower bound is $5/9 \leq \pi\left( K^{(3)}_4 \right)$. For excluded $K^{(3)}_5$ the calculations give the following: \begin{theorem}[\cite{baber_thesis}]
$\pi\left( K^{(3)}_5 \right) \leq 0.76954$
\end{theorem}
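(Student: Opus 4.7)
The plan is to apply Razborov's flag algebra method to $K^{(3)}_5$-free $3$-graphs, which is the strategy originating in Baber's thesis. First I would fix a small integer $N$ (typically $N=7$ or $N=8$, trading tightness against the size of the resulting SDP) and enumerate, up to isomorphism, all $K^{(3)}_5$-free $3$-graphs on $N$ vertices. These form a finite basis $\mathcal{H}_N$, and in the limit, every admissible density can be written as a nonnegative combination of the densities $d(H,\cdot)$ for $H \in \mathcal{H}_N$ summing to $1$. The target is a linear functional certifying $d(K^{(3)}_3, G) \leq 0.76954$ for every $K^{(3)}_5$-free limit $G$.

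Next I would enumerate the admissible \emph{types} $\sigma$ (small labeled $K^{(3)}_5$-free $3$-graphs), and for each $\sigma$ list the flags of intermediate order extending $\sigma$. Any positive semidefinite matrix $M_\sigma$ indexed by these flags gives rise, via the averaging (``unlabeling'') operator $\llbracket \cdot \rrbracket_\sigma$, to a valid asymptotic inequality $\llbracket f^{T} M_\sigma f \rrbracket_\sigma \geq 0$. Expanding each such expression in the basis $\{d(H,\cdot) : H \in \mathcal{H}_N\}$ produces a linear combination of the $N$-vertex densities that is nonnegative on every $K^{(3)}_5$-free limit. The whole problem then becomes a semidefinite program: minimize $c$ over PSD matrices $\{M_\sigma\}_\sigma$ subject to the linear constraints that $c - \sum_H d(K^{(3)}_3, H)\, x_H \;-\; \sum_\sigma \llbracket f^{T} M_\sigma f \rrbracket_\sigma$ is nonnegative coefficient-wise on the basis $\mathcal{H}_N$, where $x_H$ are the variables representing $d(H, \cdot)$. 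I would feed this SDP into a standard solver (e.g.\ through Flagmatic or SDPA), expecting the numerical optimum to hit approximately $0.76954$.

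The main obstacle is not setting up or solving the SDP numerically, which is by now largely automated, but producing a \emph{rigorous} certificate. The floating-point output has to be rounded to exact rational (or algebraic) PSD matrices $\widetilde{M}_\sigma$ such that every linear constraint is satisfied exactly and each $\widetilde{M}_\sigma$ remains positive semidefinite; this typically requires identifying the active constraints, projecting the numerical solution onto the associated affine subspace, and then adjusting eigenvalues of the projection to guarantee positivity without violating the bound. Once the rounded certificate is verified (by an exact Cholesky or by checking all principal minors), the inequality $\pi\bigl(K^{(3)}_5\bigr) \leq 0.76954$ follows immediately, since every $K^{(3)}_5$-free limit satisfies the certified linear relation.
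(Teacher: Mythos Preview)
Your proposal correctly describes the plain flag algebra/SDP method that underlies this bound, and that is indeed how the cited reference obtains it. Note, however, that in this paper the statement is simply quoted from \cite{baber_thesis} with no proof given; there is no ``paper's own proof'' to compare against, so your description stands as an accurate sketch of the original computer-assisted argument.
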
 with best known lower bound $3/4 \leq \pi\left( K^{(3)}_5 \right)$.

For a list of results provided by flag algebraic calculations, see \cite{razb_4_vertex, flagmatic}. The power of flag algebra has been illustrated in a wide range of other combinatorial questions \cite{ramsey_flag, permutation_flag}. Unfortunately, the computer-generated proofs lack insight and scale-ability compared to classical, hand-crafted arguments. They often only work in a small enough parameter range (for example, bounding $\pi\left(H\right)$ for $H$ with at most $7$ vertices). A survey by Razborov \cite{flag_interim_report} calls such applications plain. 

One of the goals of this paper is to show that the powerful plain flag algebra method, in this case, can be performed by hand, resulting in a general and scale-able theorem. The main ideas and proof steps, therefore, correspond with a plain application of flag algebra and were heavily inspired by it. During the proofs, relevant parts of the flag algebra theory will be highlighted. While the asymptotic result can be fully proved with flag algebraic manipulations, the bound with finite $n$ is only attainable with a more precise bounding of the errors. The theory is not explained here, for a quick introduction see \cite{flag_first_glance} or the original text \cite{razb_flag_algebras}.

\subsection{Overview of the Result}

In this paper, the generalized Turán problem for complete hypergraphs will be investigated, with the following contribution:
\begin{theorem}\label{main_theorem}
For integers $1 < k \leq g < r$ and any $n>(r-1) \left(1 + \left(\frac{(r-k)}{k-1}\right)^2 \right),$ 
\begin{equation*}
\begin{gathered}
     \pi\left(n, K^{(k)}_{g}, K^{(k)}_r\right) \leq \\
    \leq  \left(1 + \frac{(r-1) (r-k)^2}{(k-1)^2 n-(r-1) \left(2 k^2-2 k (r+1)+r^2+1\right)} \right)\prod_{m=k}^{g} \left(1 - \frac{\binom{m-1}{k-1}}{\binom{r-1}{k-1}} \right).
\end{gathered}
\end{equation*}
\end{theorem}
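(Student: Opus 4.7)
The plan is to induct on $g$, with the base case $g = k$ being de Caen's theorem. Setting $p_m = N_m/\binom{n}{m}$, where $N_m$ is the number of $K^{(k)}_m$'s in $\mathcal{H}$, it suffices to prove the single-step ratio bound
\[
\frac{p_g}{p_{g-1}} \leq \bigl(1+O(n^{-1})\bigr)\left(1 - \frac{\binom{g-1}{k-1}}{\binom{r-1}{k-1}}\right)
\]
for each $g > k$. Telescoping this from $m = g$ down to $m = k$, together with de Caen's base case (and the trivial $p_{k-1} = 1$), gives the asymptotic product, and tracking binomial approximations along the way yields the explicit finite-$n$ error stated in Theorem \ref{main_theorem}.

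Using $g N_g = \sum_C d(C)$ (summing over $(g-1)$-cliques $C$, with $d(C)$ the number of $v \notin C$ such that $C \cup \{v\}$ is a $g$-clique), the ratio bound is equivalent to the lower bound $\#\{\text{non-extending pairs}\} \geq \bigl(\binom{g-1}{k-1}/\binom{r-1}{k-1}\bigr)(n-g+1)N_{g-1}$, up to the $(1 + O(n^{-1}))$ factor. I would establish this via a double count of triples $(C, v, S)$ with $C$ a $(g-1)$-clique, $v \notin C$, and $S \subseteq C$ a $(k-1)$-subset such that $S \cup \{v\} \notin E(\mathcal{H})$. Counting by $(S, v)$ first expresses the total as $\sum_S c_{g-1}(S)\, d_-(S)$, where $c_{g-1}(S)$ is the number of $(g-1)$-cliques through $S$ and $d_-(S)$ is the codegree of $S$; counting by $(C, v)$ gives the non-extending pair count weighted by the ``witness'' multiplicity $B(C, v) \in \{1, \ldots, \binom{g-1}{k-1}\}$. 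Combining these identities with a de Caen-type codegree bound should then produce the required inequality.

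The main obstacle is producing the exact coefficient. A naive division by the maximal witness multiplicity $\binom{g-1}{k-1}$ yields a bound off by a factor of $\binom{g-1}{k-1}$: what is actually needed is that the \emph{weighted} codegree average $\mathbb{E}\bigl[d_-(S) \mid S \text{ contained in a } (g-1)\text{-clique}\bigr]$ exceeds the unweighted de Caen bound by a factor of $\binom{g-1}{k-1}$. This sharper statement is the heart of the argument and is where the flag algebra framework provides leverage: working in the type $\sigma$ that is a $(g-1)$-clique with a marked $(k-1)$-subset, the required inequality is extracted as a linear combination of $\sigma$-flag density identities together with the vanishing of $K^{(k)}_r$-density, exactly mirroring how de Caen's original trick lifts from the $k$-edge level to the $(g-1)$-clique level. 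The explicit finite-$n$ error in Theorem \ref{main_theorem} is then obtained by accumulating the multiplicative $O(n^{-1})$ corrections from binomial approximations of the form $\binom{n-j}{r-j}/\binom{n}{r} = \binom{r}{j}/\binom{n}{j}$ across all $g-k+1$ inductive steps.
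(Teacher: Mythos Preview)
Your proposal has a genuine gap at exactly the point you flag as ``the heart of the argument''. You reduce everything to the single-step ratio bound
\[
\frac{p_g}{p_{g-1}} \;\le\; \bigl(1+O(n^{-1})\bigr)\,x_{g,r},
\]
but the sketch you give for it does not close. Your double count of triples $(C,v,S)$ yields $\sum_S c_{g-1}(S)\,d_-(S)$ on one side; to extract the ratio bound you then need the $c_{g-1}$-weighted average of $d_-(S)$ over $(k-1)$-sets to be at least $\binom{g-1}{k-1}\cdot n/\binom{r-1}{k-1}$. But the correlation runs the wrong way: $(k-1)$-sets lying in many $(g-1)$-cliques are precisely those with \emph{high} codegree, hence \emph{small} $d_-$. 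So the weighted average of $d_-$ should be expected to fall below the unweighted de~Caen value, not exceed it by a factor of $\binom{g-1}{k-1}$. Your appeal to ``a linear combination of $\sigma$-flag density identities together with the vanishing of $K^{(k)}_r$-density'' does not explain how a type of size $g-1$ reaches the $K_r$-free hypothesis; to invoke $d(K_r,G)=0$ you must pass through all clique sizes up to $r$, and a single type $\sigma$ with one-vertex extensions does not do that.

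The paper's proof does \emph{not} establish any step-by-step ratio bound, and it is not clear that such a bound even holds for every $K_r$-free $k$-graph when $k\ge 3$. Instead, the paper produces for each $m\in\{k,\dots,r-1\}$ a three-term linear inequality among $f_{m-1},f_m,f_{m+1}$ (Lemma~\ref{main_lemma}, coming from the square $\llbracket (K_m^{T_{m-1}}-xT_{m-1})^2\rrbracket\ge 0$), and then takes a single nonnegative combination of all $r-k$ of these so that every intermediate $f_m$ cancels, leaving only $f_{k-1}=1$, $f_g$, and $f_r=0$. The combination coefficients are obtained by inverting a tridiagonal matrix (Section~\ref{tridiag_section}); the explicit finite-$n$ error term in the theorem comes from the perturbation analysis of that inverse, not from accumulating telescoped errors. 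In short: the $K_r$-free condition is used exactly once, at the top of a global linear system, rather than being fed into each inductive step as your outline requires.
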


Note this means asymptotically that
\begin{corollary}
For integers $1 < k \leq g < r$ 
\begin{equation*}
    \pi\left(K^{(k)}_{g}, K^{(k)}_r\right) \leq \prod_{m=k}^{g} \left(1 - \frac{\binom{m-1}{k-1}}{\binom{r-1}{k-1}} \right).
\end{equation*}
\end{corollary}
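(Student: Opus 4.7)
Since the corollary follows from Theorem \ref{main_theorem} by sending $n \to \infty$, the real content is the inequality itself, and I describe a plan that targets the asymptotic version (and that, with extra bookkeeping, would upgrade to the finite-$n$ theorem). The plan is induction on $g$, with base case $g = k$ given by de Caen's theorem, providing the factor $1 - 1/\binom{r-1}{k-1}$ corresponding to $m = k$ in the product. For the inductive step I would aim to establish the density recursion
$$
p_g \;\leq\; \left(1 - \frac{\binom{g-1}{k-1}}{\binom{r-1}{k-1}}\right) p_{g-1},
$$
where $p_m$ is the $K^{(k)}_m$-density of the host $K^{(k)}_r$-free hypergraph; telescoping from $m = k$ to $m = g$ then yields the stated product bound.

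The key step is a flag-algebraic double count of triples $(S, T, v)$ in which $S \subset T$ are a $K^{(k)}_{g-1}$-copy and a $K^{(k)}_{r-1}$-copy respectively, and $v \notin T$ is an external vertex. Because $G$ is $K^{(k)}_r$-free, for each such triple there must exist a $(k-1)$-subset $W \subset T$ with $W \cup \{v\}$ a non-edge, and since $T$ is complete the missing $k$-set must contain $v$. The critical combinatorial fact is that among $(k-1)$-subsets of $T$, exactly $\binom{g-1}{k-1}$ lie inside $S$ out of $\binom{r-1}{k-1}$ total, so by distributing the ``at least one non-edge'' constraint over a uniformly random choice of $S$ inside $T$, the fraction of $v$'s that are forced to fail as extensions of $S$ is precisely $\binom{g-1}{k-1}/\binom{r-1}{k-1}$ asymptotically. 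In flag-algebra language this is a linear relation between the $[K^{(k)}_g]$, $[K^{(k)}_{g-1}]$ and $[K^{(k)}_r] = 0$ densities, rooted at the appropriate type.

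The main obstacle is passing from the ``at least one bad $W$'' local constraint to the exact factor $\binom{g-1}{k-1}/\binom{r-1}{k-1}$: a naive union bound only yields the much weaker $1/\binom{r-1}{k-1}$. Recovering the tighter ratio requires the linear algebraic combination of relations indexed by the different $(k-1)$-subsets of the root type, as advertised in the introduction. The additional work needed for the finite-$n$ bound in Theorem \ref{main_theorem}, as opposed to the asymptotic corollary, is to quantify the deviation of the finite counts from the flag-algebraic limits, which is what produces the explicit denominator $(k-1)^2 n - (r-1)(2k^2 - 2k(r+1) + r^2 + 1)$ in the error term and the hypothesis $n > (r-1)(1 + ((r-k)/(k-1))^2)$.
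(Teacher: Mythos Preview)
Your proposal has a genuine gap in the inductive step, and the sketch does not match how the paper actually proceeds.

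The double count over triples $(S,T,v)$ with $T$ a $K^{(k)}_{r-1}$-copy cannot deliver the ratio inequality $p_g\le x_{g,r}\,p_{g-1}$. What your count controls is
\[
\sum_{(S,T,v)}\mathbf{1}\bigl[S\cup\{v\}\text{ is }K_g\bigr]\;\le\;x_{g,r}\sum_{(S,T,v)}1,
\]
and after unwinding, each $K_{g-1}$-copy $S$ is weighted by the number of $K_{r-1}$-copies $T$ containing it. These weights are wildly non-uniform; indeed a $K_r$-free host may contain no $K_{r-1}$ at all (take any $K_{r-1}$-free $k$-graph), in which case your triple count is empty while $p_g$ can still be positive for $g<r-1$. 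So the double count simply does not see all copies of $K_g$. Your own paragraph flags exactly this: the ``at least one bad $W$'' observation, applied per pair $(T,v)$, establishes nothing about $K_{g-1}$'s that do not sit inside a complete $(r-1)$-set. The vague appeal to a ``linear algebraic combination of relations indexed by the different $(k-1)$-subsets of the root type'' is not a proof; it is precisely the content that is missing.

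What the paper does is different in kind. It never argues directly with $K_{r-1}$-copies. Instead, for every $m$ with $k\le m\le r-1$ it proves a three-term linear inequality among the consecutive densities $f_{m-1},f_m,f_{m+1}$ (\cref{main_lemma}), obtained from the nonnegative square $\llbracket (K_m^{T_{m-1}}-xT_{m-1})^2\rrbracket_{T_{m-1}}$. The $K_r$-freeness enters only through the boundary value $f_r=0$. The $r-k$ inequalities are then combined simultaneously by solving a tridiagonal linear system (\cref{tridiag_lemma}); this is where the ``linear algebraic combination'' actually happens, and it is not an induction on $g$ at all. If one insists on phrasing the output as a recursion, what drops out is a \emph{backward} induction: from $f_r=0$ and the $m=r-1$ relation one gets $f_{r-1}\le x_{r-1,r}f_{r-2}$, and feeding $f_{m+1}\le x_{m+1,r}f_m$ into the $m$-th relation gives $f_m\le x_{m,r}f_{m-1}$ (this is equivalent to the computation $\phi_m=1$ in \cref{phi_claim}). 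Your forward scheme---start from de~Caen at $g=k$ and push up---does not have access to the needed information, because the three-term relation at level $g$ involves $f_{g+1}$, which the inductive hypothesis on smaller $g$ says nothing about.
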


The asymptotic bound is known to be tight when $k=2$, with the matching, balanced $(r-1)$-partite construction. Additionally, it agrees with the best-known general hypergraph Turán bound by de Caen \cite{de_caen_bound} which is conjectured to not be tight.

\cite{sido_survey} describes various lower bound constructions for the $2 < k = g$ case. A simple construction (when $k-1$ divides $r-1$) splits the vertex set into $\frac{r-1}{k-1}$ equal groups and includes each $k$ set that is not fully contained in a group. Using $l = \frac{r-1}{k-1}$ and an inclusion-exclusion calculation, this gives the asymptotic bound

\begin{equation*}
    \sum_{s=0}^{\lfloor g/k \rfloor} (-1)^{s} \binom{l}{s} \sum_{\substack{k \leq i_1, \dots, i_s \\ i_1+ \dots + i_s \leq g}} \binom{g}{i_1, \dots, i_s, g-i_1- \dots - i_s}l^{-i_1- \dots - i_s} \leq \pi\left( K^{(k)}_g, K^{(k)}_r\right).
\end{equation*}

While it is known that this construction is not optimal when $k=g$, in the $k \ll g$ regime, where $K^{(k)}_g$ appears more if the edges are "grouped", it provides a stronger bound.  \Cref{pi345_section} shows that this is asymptotically the best construction for $\pi \left( K^{(3)}_4, K^{(3)}_5\right)$. In general $g=r-1$ gives
\begin{equation*}
    \begin{split}
        & \ \sqrt{ 2 \pi r} \ \ e^{\frac{\log(2 \pi k)r}{2k}} \approx \\[2ex]
        \approx & \ \binom{r-1}{k-1, k-1, \dots , k-1} l^{-(r-1)} \leq \\[2ex]
        \leq & \ \pi\left(K^{(k)}_{r-1}, K^{(k)}_{r}\right) \leq \\[2ex]
        \leq & \ \prod_{m=k}^{r-1} \left( 1 - \frac{\binom{m-1}{k-1}}{\binom{r-1}{k-1}} \right) \leq \\[2ex]
        \leq & \ e^{(k-r)/k}.
    \end{split}
\end{equation*}

Given a hypergraph $G$, write $d(H, G)$ for the induced density of $H$ in $G$. The main tool used in the proof of \cref{main_theorem} is:
\begin{lemma}\label{main_lemma}
For all $0 \leq x$ and integers $k \leq m < n$, if $G$ is an $n$ vertex $k$-graph then
\begin{equation*}
    \begin{matrix*}[l]
        0 \geq & \left(- \frac{1 - \frac{k-1}{m}}{x}\right) & d\left(K^{(k)}_{m+1}, G\right) & + \\ & \left(2 - \frac{k-1}{mx} - \frac{1}{(n-m)x}\right) & d\left(K^{(k)}_m, G\right)  & + \\ & (-x) & d\left(K^{(k)}_{m-1}, G\right). &  
    \end{matrix*}
\end{equation*}
\end{lemma}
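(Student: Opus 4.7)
The plan is to reformulate the lemma as a single polynomial inequality in the densities (by minimising over $x$) and then prove it via Cauchy--Schwarz applied to clique extensions. Multiplying through by $-x$ rewrites the claim as the statement that the convex quadratic
\begin{equation*}
d_{m-1}\,x^2 - 2 d_m\,x + \frac{m-k+1}{m}\, d_{m+1} + \left(\frac{k-1}{m} + \frac{1}{n-m}\right) d_m
\end{equation*}
is non-negative for every $x \geq 0$. Its minimum, attained at the non-negative point $x^{\ast} = d_m/d_{m-1}$, is non-negative iff
\begin{equation*}
d_m^2 \leq d_{m-1} \left[ \frac{m-k+1}{m}\, d_{m+1} + \left(\frac{k-1}{m} + \frac{1}{n-m}\right) d_m \right],
\end{equation*}
a Kruskal--Katona-flavoured bound that I would establish by double counting; the degenerate case $d_{m-1} = 0$ forces $d_m = 0$ and is trivial.

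For each $(m-1)$-clique $\sigma$ of $G$, let $X(\sigma)$ denote the number of vertices $v \notin \sigma$ for which $\sigma \cup \{v\}$ is a $K^{(k)}_m$. Counting ordered (m-clique, distinguished vertex) pairs in two ways yields $\sum_\sigma X(\sigma) = m c_m$, where $c_j$ is the number of $K^{(k)}_j$ in $G$, and Cauchy--Schwarz over the $(m-1)$-cliques supplies the lower bound $\sum_\sigma X(\sigma)^2 \geq m^2 c_m^2 / c_{m-1}$. The heart of the proof is a matching \emph{upper} bound. Expanding $X(\sigma)^2$ as a count of ordered pairs $(v,w)$ of extensions of $\sigma$ and re-indexing the $v \neq w$ part as a sum over pairs $(K, w)$ with $K$ an $m$-clique and $w \notin K$, the off-diagonal contribution equals $\sum_{(K,w)} Z(K,w)$, where $Z(K,w)$ is the number of $v \in K$ such that $(K \setminus \{v\}) \cup \{w\}$ is a $K^{(k)}_m$. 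I would then argue that $Z(K,w) = m$ when $K \cup \{w\}$ is an $(m+1)$-clique and $Z(K,w) \leq k-1$ otherwise.

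This non-clique bound is the main obstacle. The key observation is that any missing $k$-edge of $K \cup \{w\}$ must contain $w$ (else it would already be missing from the $m$-clique $K$), and that a vertex $v \in K$ contributes to $Z(K,w)$ precisely when it lies in every such missing edge. The intersection of these missing edges is contained in any single one of them, so has at most $k$ elements, and since it already contains $w \notin K$, at most $k - 1$ of its elements lie in $K$. Summing the contributions over all $(K,w)$ together with the diagonal $v = w$ contribution of $m c_m$ produces
\begin{equation*}
\sum_\sigma X(\sigma)^2 \leq m c_m + m(m+1) c_{m+1} + (k-1)\bigl(c_m(n-m) - (m+1) c_{m+1}\bigr).
\end{equation*}
Combining with the Cauchy--Schwarz lower bound and translating $c_j$ into $d_j$ via $c_j = d_j \binom{n}{j}$ together with $(m+1)\binom{n}{m+1} = (n-m)\binom{n}{m}$ and $m\binom{n}{m} = (n-m+1)\binom{n}{m-1}$ yields
\begin{equation*}
\frac{n-m+1}{n-m}\, d_m^2 \leq \frac{m-k+1}{m}\, d_{m-1} d_{m+1} + \left(\frac{k-1}{m} + \frac{1}{n-m}\right) d_{m-1} d_m,
\end{equation*}
which is strictly stronger than the required bound since $\tfrac{n-m+1}{n-m} \geq 1$.
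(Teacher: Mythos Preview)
Your argument is correct and is, at heart, the same proof as the paper's. The paper starts from $\mathbb{E}\bigl[q(\mathbf{S}_{m-1})\,(r(\mathbf{S}_{m-1})-x)^2\bigr]\ge 0$ for a uniformly random $(m-1)$-set $\mathbf{S}_{m-1}$; keeping $x$ free there is equivalent, via the discriminant, to your Cauchy--Schwarz bound $\bigl(\sum_\sigma X(\sigma)\bigr)^2\le c_{m-1}\sum_\sigma X(\sigma)^2$. The paper's decisive combinatorial step is that for every non-complete $H\in\mathcal H_{m+1}$ the intersection of its non-edges has size $s(H)\le k$; this is exactly your bound $Z(K,w)\le k-1$, since when $K$ is an $m$-clique every non-edge of $K\cup\{w\}$ contains $w$, so $Z(K,w)=s(K\cup\{w\})-1$. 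The only substantive difference is bookkeeping: the paper passes through the approximation $r(S)^2\le rr(S)+\tfrac{r(S)}{n-m}$ before invoking the $s(H)$ bound, thereby discarding a small factor, whereas your direct count of ordered extension pairs retains it and produces the slightly sharper coefficient $\tfrac{n-m+1}{n-m}$ on $d_m^2$. Apart from this cosmetic gain and the more elementary counting language, the two proofs coincide.
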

Note that the densities $d\left(K^{(k)}_m, G\right)$ only appear linearly in the expression. For different $k, g, r$ parameters, a convex combination of the expressions appearing in \cref{main_lemma}, with suitable $x, m$ values substituted in yields \cref{main_theorem}. As a comparison, \cite{de_caen_bound} utilizes similar ideas, but with a more complicated (non-linear) expression,
\begin{equation*}
    f_{m+1} \geq \frac{m^2 f_m}{(m-k+1)(n-m)} \left(\frac{f_m (n-m+1)}{f_{m-1} m} - \frac{(k-1)(n-m) + m}{m^2} \right)
\end{equation*} where $f_m = d\left(K^{(k)}_m, G\right)$ for short.

\subsection{Outline of the Paper}
\Cref{notation_section} summarizes the important notations and conventions throughout the paper. The proof of \cref{main_theorem} is included in  \cref{main_theorem_section} using two important components: \cref{main_lemma}, which is proved in \cref{lindens_section}; and a technical calculation (\cref{tridiag_lemma}), that is included in \cref{tridiag_section}. The short \cref{pi345_section} includes a certificate for $\pi\left(K^{(3)}_4, K^{(3)}_5\right) = 3/8$. The paper finishes with a few concluding remarks in \cref{outro_section}, the limitations of this approach and possible directions.

\section{Notation and Conventions}\label{notation_section}

\subsection{Basic Notation}

For a set $V$, the collection of subsets with size $k$ is denoted by $\binom{V}{k}$. The hypergraphs are identified with their edge sets; $G \subseteq \binom{V(G)}{k}$ is a $k$-graph with $V(G)$ vertex set. $K_n^{(k)}$ is the complete $k$-graph with $n$ vertices. For $S \subseteq V(G)$ the induced sub-hypergraph is $G \! \upharpoonright_S = G \cap \binom{S}{k}$. Hypergraph isomorphism is represented by  $G \simeq H$. $\mathcal{H}^{(k)}_n$ is the collection of non-isomorphic $k$-graphs having $n$ vertices.

Bold symbols indicate random variables. The uniform distribution from a set $V$ is represented by $\operatorname{Unif}(V)$. The density of $H$ in $G$ is defined to be $d(H, G) = \mathbb{P} \left[ G \! \upharpoonright_{\mathbf{S}} \simeq H \right]$ where $\mathbf{S} \sim \operatorname{Unif}\binom{V(G)}{|H|}$. Notice that this is the induced density, corresponding more with the flag algebraic approach, rather than the classical sub-hypergraph inclusion (referenced in the introduction). Write $d_s(H, G) = \mathbb{P} \left[ G \! \upharpoonright_{\mathbf{S}} \simeq F, H \subseteq F \right]$ for the classical inclusion with the same $\mathbf{S} \sim \operatorname{Unif}\binom{V(G)}{|H|}$. When $H$ is a complete hypergraph, the two notions are equivalent. The generalized Turán problem is to determine the value $$\pi\left(n, F, H\right) = \max\left\{d_s(F, G) \ : \ G \in \mathcal{H}_n, \quad d_s(H, G) = 0 \right\}.$$ The asymptotic problem asks $\pi(F, G) = \lim_{n \rightarrow \infty } \pi(n, F, G)$ (it is known that the limit always exists).

The quantity $$x_{m, r}^{(k)} = 1 - \frac{\binom{m-1}{k-1}}{\binom{r-1}{k-1}}$$ will be important, these are the terms appearing in the product.

\subsection{Flag Notation}
The hypergraphs represent the corresponding flags with empty type. $T^{(k)}_n$ is the complete type with $n$ vertices and all $k$-uniform edges. Type is indicated as a superscript. In particular $K^{(k), T^{(k)}_m}_{n}$ is the unique complete flag on $n$ vertices with a type having $m$ vertices. For a type $T$, $T$ also represents the flag with type $T$ and no extra vertices/edges. The averaging operator, transforming a $T$-typed flag $F^T$ into a flag with empty type is $\left\llbracket F^T \right\rrbracket_T$.

\subsection{Conventions}
In most of the proofs, the symbol $k$ is fixed, and the appearing statements concern $k$-graphs. For this reason, $k$ superscripts from the notations are often dropped. Additionally, $g, \ r, \ n$ symbols are reserved. They are integer parameters of the main question; determining the value of $\pi(n, K_g, K_r)$.

\section{Proof of Main Theorem}\label{main_theorem_section}

In this short section \cref{main_theorem} will be proved with the use of \cref{main_lemma} and \cref{tridiag_lemma}, a technical result included in \cref{tridiag_section}. Let's recall the main theorem, with the introduced $x^{(k)}_{m, r}$ notation.

\begin{reptheorem}{main_theorem}
Given integers $1 < k \leq g < r$ and $n>(r-1) \left(1 + \left(\frac{(r-k)}{k-1}\right)^2 \right)$, then 
\begin{equation*}
\begin{gathered}
    \pi\left(n, K^{(k)}_{g}, K^{(k)}_r\right) \\ \leq \left(1 + \frac{(r-1) (r-k)^2}{(k-1)^2 n-(r-1) \left(2 k^2-2 k (r+1)+r^2+1\right)} \right) \prod_{m=k}^{g} x^{(k)}_{m, r}.
\end{gathered}
\end{equation*}
\end{reptheorem}

In the following, the superscript $k$ is dropped from the notations for easier readability. The following claim gives bounds on the $x_{m, r}$ values. It follows easily after expanding the definition of $x_{m, r}$, therefore the proof is not included.
\begin{claim}\label{x_positive_claim}
    When $k-1 \leq m \leq r$, $$0 \leq x^{(k)}_{m, r} \leq 1,$$ with equality at $m=r$ and $m = k-1$ respectively. The smallest nonzero value is $x_{r-1, r} = \frac{k-1}{r-k}$
\end{claim}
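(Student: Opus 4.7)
The approach is direct: expand the definition $x_{m,r}^{(k)} = 1 - \binom{m-1}{k-1}/\binom{r-1}{k-1}$ and combine it with the elementary monotonicity of the binomial coefficient in its upper argument. The whole claim is essentially a rewriting exercise, which is presumably why the author omits the proof; I do not anticipate any real obstacle.

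For the inequality $0 \leq x_{m,r}^{(k)} \leq 1$ with equalities at $m=r$ and $m=k-1$, I would argue as follows. The function $m \mapsto \binom{m-1}{k-1}$ is non-negative on $\mathbb{Z}_{\geq 0}$ (it is $0$ when $m-1 < k-1$ and a positive integer otherwise) and non-decreasing in $m$, which is immediate from Pascal's identity or the factorial formula. Restricting to $k-1 \leq m \leq r$ therefore gives $0 \leq \binom{m-1}{k-1} \leq \binom{r-1}{k-1}$, so dividing by $\binom{r-1}{k-1}$ and subtracting from $1$ yields $0 \leq x_{m,r}^{(k)} \leq 1$. The two equality cases are then single substitutions: $\binom{k-2}{k-1} = 0$ gives $x_{k-1,r}^{(k)} = 1$, while $\binom{r-1}{k-1}/\binom{r-1}{k-1} = 1$ gives $x_{r,r}^{(k)} = 0$.

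For the smallest nonzero value, strict monotonicity of $\binom{m-1}{k-1}$ once $m \geq k$ shows that, among $k-1 \leq m \leq r$ with $x_{m,r}^{(k)} > 0$, the minimum is attained at $m = r-1$. I would then evaluate it using Pascal's rule $\binom{r-1}{k-1} - \binom{r-2}{k-1} = \binom{r-2}{k-2}$, so that
\[
    x_{r-1,r}^{(k)} \;=\; \frac{\binom{r-2}{k-2}}{\binom{r-1}{k-1}},
\]
and expand via factorials to obtain the stated closed form (which also makes the monotonicity step quantitative if desired).
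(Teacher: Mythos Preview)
Your approach is exactly what the paper intends: it omits the proof entirely, remarking only that the claim follows by expanding the definition of $x_{m,r}$, and your monotonicity-of-binomials argument is precisely that expansion. One small caution on the last step: when you actually carry out the factorial computation of $\binom{r-2}{k-2}\big/\binom{r-1}{k-1}$ you will obtain $(k-1)/(r-1)$, not the printed $(k-1)/(r-k)$, so do not be surprised when the numbers disagree---the value $(k-1)/(r-1)$ is the correct one (and is indeed what the paper itself uses later, e.g.\ in verifying $\phi_{r-1}=1$).
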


\begin{proof}[Proof of \cref{main_theorem}]
Choose any $G \in \mathcal{H}_{n}$ (irrespective of the value of $d(K_{r}, G)$), with $$n>(r-1) \left(1 + \left(\frac{(r-k)}{k-1}\right)^2 \right),$$ and for short write $f_m = d\left(K_{m}, G\right)$. In the range $m \in \{k, k+1, ..., r-1\}$, $x_{m, r}$ is positive, therefore \cref{main_lemma}, with $x=x_{m, r}$ holds.
\begin{equation*}
    0 \geq - \frac{1 - \frac{k-1}{m}}{x_{m, r}}f_{m+1} + \left(2 - \frac{k-1}{m x_{m, r}} - \frac{1}{(n-m)x_{m, r}} \right) f_{m} - x_{m, r} f_{m-1}
\end{equation*}

The value $(n-m)x_{m, r}$ is minimal at $m=r-1$. Use $E_m$ for the above expression but $\frac{1}{(n-m)x_{m, r}}$ replaced with $\frac{r-k}{(n-r+1)(k-1)}$.
\begin{equation*}
    E_{m} = - \frac{1 - \frac{k-1}{m}}{x_{m, r}}f_{m+1} + \left(2 - \frac{k-1}{m x_{m, r}} - \frac{r-k}{(n-r+1)(k-1)} \right) f_{m} - x_{m, r} f_{m-1}
\end{equation*}

The replacement decreases the value, giving that each $E_{m}$ is still non-positive. This gives that any $\delta_{k}, \delta_{k+1}, ..., \delta_{r-1}$ sequence with all $0 \leq \delta_m$ results in $0 \geq \sum_{m=k}^{r-1} \delta_{m} E_{m}$. For a lower bound it is enough to find coefficients $0 \leq \delta_m$ satisfying \begin{equation} \label{main_theorem_equation}
    0 \geq \sum_{m=k}^{r-1} \delta_m E_{m} = - \delta_k x_{k, r} f_{k-1} + f_g - \delta_{r-1} \frac{1-\frac{k-1}{r-1}}{x_{r-1, r}} f_r.
\end{equation}
With the assumption that $f_r = d(K_r, G) = 0$ and the simple observation that $f_{k-1} = d(K_{k-1}, G) = 1$, one can deduce from \cref{main_theorem_equation} that $\pi\left(n, K_g, K_r \right) \leq \delta_k x_{k, r}$. Notice that finding $\delta_m$ corresponds with solving \cref{main_theorem_equation}, a system of linear equations. The technical \cref{tridiag_section} includes a way to approximate this system of linear equations. The following lemma summarizes the result, concluding the proof of \cref{main_theorem}.

\begin{lemma}\label{tridiag_lemma}
If $n>(r-1) \left(1 + \left(\frac{(r-k)}{k-1}\right)^2 \right)$ then the solution to the linear equations \cref{main_theorem_equation} satisfies that $\delta_m \geq 0$ and that $$\delta_k = \left(1 + \frac{(r-1) (r-k)^2}{(k-1)^2 n-(r-1) \left(2 k^2-2 k (r+1)+r^2+1\right)} \right) \prod_{m=k+1}^g x_{m, r}.$$
\end{lemma}
\end{proof}

\section{Linear Density Relations}\label{lindens_section}

This section covers the main combinatorial calculations involved in the proof of \cref{main_lemma}. The main purpose of \cref{main_lemma} is to act as a building block. Not only the validity is easier to verify, but it also involves the densities $d(K_m, G)$ linearly, therefore it can be combined easily; as illustrated in \cref{main_theorem}. The small claims in this section follow closely core results from the flag algebra theory. The connection will be highlighted in \cref{flag_claims_remark}. The connection between the plain flag algebra application and this proof is discussed in \cref{flag_sdp_remark}.

In the upcoming proofs, the $k$ superscript will not be included. $S$ is any subset of $V(G)$, while $S_m$ is an $m$ element subset of $V(G)$. Write $q(S)$ for the indicator function that is $1$ when $S$ induces a complete hypergraph in $G$ and $0$ otherwise. $l(S)$ is the number of $v \in V(G) \setminus S$ where $S+v$ is complete in $G$. The corresponding probability is $r(S_m) = \frac{l(S_m)}{n-m}$. Similarly $rr(S)$ is the probability that two different vertex extensions are both complete. $$rr(S_m) = \frac{\binom{l(S_m)}{2}}{\binom{n-m}{2}}$$ 

When $H \in \mathcal{H}_n$, then $s(H)$ is used for the size of the intersection of non-edges in $H$. In particular $s(K_n) = n$ and $s(K_n^-) = k$ where $K_n^-$ represents the hypergraph on $n$ vertices that has exactly one edge missing.

In the proof of \cref{main_lemma}, the fact, that $q(S)(r(S)-x)^2$ is always positive, will be exploited. Understanding the terms in the square is done by the following short claims. First, $r(S)^2$ and $rr(S)$ are related.

\begin{claim}\label{ppp_claim}
$$r(S_{m-1})^2 \leq rr(S_{m-1}) + r(S_{m-1}) \frac{1}{n-m}$$
\end{claim}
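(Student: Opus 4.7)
The plan is a short direct calculation: unpack both sides via the definitions and reduce to a trivial inequality. Introduce the abbreviations $l = l(S_{m-1})$ and $N = n-m+1 = \lvert V(G)\setminus S_{m-1}\rvert$. Then $r(S_{m-1}) = l/N$, $rr(S_{m-1}) = l(l-1)/(N(N-1))$, and $1/(n-m) = 1/(N-1)$. Substituting, the claimed inequality becomes
\begin{equation*}
\frac{l^2}{N^2} \;\leq\; \frac{l(l-1)}{N(N-1)} + \frac{l}{N(N-1)} \;=\; \frac{l^2}{N(N-1)},
\end{equation*}
which is just $N-1 \leq N$. Since the ambient assumption in \cref{main_lemma} is $m < n$, we have $N-1 = n-m \geq 1$, so every denominator is nonzero and the inequality holds.

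A more conceptual (essentially equivalent) rewrite makes the bound transparent and is worth including. Draw two independent uniform vertices $\mathbf{u},\mathbf{v}$ from $V(G)\setminus S_{m-1}$; the left-hand side $r(S_{m-1})^2$ is precisely the probability that $S_{m-1}+\mathbf{u}$ and $S_{m-1}+\mathbf{v}$ are both complete. Conditioning on $\mathbf{u}=\mathbf{v}$ contributes $l/N^2$, which is at most $l/(N(N-1)) = r(S_{m-1})/(n-m)$; conditioning on $\mathbf{u}\neq\mathbf{v}$ contributes $\frac{N-1}{N}\,rr(S_{m-1}) \leq rr(S_{m-1})$. Summing the two bounds yields the claim. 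This matches the standard flag-algebraic move of comparing sampling with and without replacement.

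There is no real obstacle; the only subtlety worth flagging is the exact denominator. One might naïvely expect a coefficient $1/(n-m+1)$ on the linear term (the reciprocal of $N$), whereas the correct coefficient is $1/(n-m) = 1/(N-1)$. This asymmetry arises because $r^2$ carries a factor $1/N^{2}$ while $rr$ carries $1/(N(N-1))$; exactly the slack $N(N-1) \leq N^{2}$ absorbs the correction term $r(S_{m-1})/(n-m)$, giving a clean inequality rather than an approximate one.
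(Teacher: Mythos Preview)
Your proof is correct and is essentially the same one-line computation as in the paper: the paper writes $r^2 - rr = r\bigl(l/(n-m+1) - (l-1)/(n-m)\bigr) \leq r/(n-m)$, while you rearrange to $rr + r/(n-m) = l^2/(N(N-1)) \geq l^2/N^2 = r^2$. Your added sampling-with-versus-without-replacement interpretation is a nice gloss but not a different argument.
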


This simply follows from \begin{equation*}
r(S_{m-1})^2 - rr(S_{m-1}) = r(S_{m-1}) \left(\frac{l(S_{m-1})}{n-m+1} - \frac{l(S_{m-1})-1}{n-m}\right) \leq r(S_{m-1}) \frac{1}{n-m}.
\end{equation*} 

Second, linear equality between densities is shown.

\begin{claim}\label{chain_rule_claim}
Suppose $m \leq l \leq n$ with $F \in \mathcal{H}_{m}$ and $G \in \mathcal{H}_n$ then
\begin{equation*}
    d(F, G) = \sum_{H \in \mathcal{H}_{l}} d(F, H) d(H, G), 
\end{equation*}
in particular
\begin{equation*}
    d(K_m, G) = \sum_{H \in \mathcal{H}_{m+1}} \frac{s(H)}{m+1} d(H, G).
\end{equation*}
\end{claim}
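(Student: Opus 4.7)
The plan is to prove the general identity first and then obtain the particular case as a direct specialization. For the general identity, I would use a two-stage sampling argument: draw $\mathbf{T} \sim \operatorname{Unif}\binom{V(G)}{l}$ and then, conditionally on $\mathbf{T}$, draw $\mathbf{S} \sim \operatorname{Unif}\binom{\mathbf{T}}{m}$. A short symmetry/double-counting check shows that the marginal distribution of $\mathbf{S}$ is $\operatorname{Unif}\binom{V(G)}{m}$, so $\mathbb{P}[G\!\upharpoonright_{\mathbf{S}} \simeq F] = d(F, G)$ by definition.

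Conditioning on $\mathbf{T}$, grouping by the isomorphism class $H$ of $G\!\upharpoonright_{\mathbf{T}}$, and observing that the conditional probability depends only on $H$ (since picking a uniform $m$-subset of $\mathbf{T}$ and testing isomorphism to $F$ is an intrinsic property of $G\!\upharpoonright_{\mathbf{T}}$), yields
\begin{equation*}
d(F,G) \;=\; \sum_{H \in \mathcal{H}_l} \mathbb{P}[G\!\upharpoonright_{\mathbf{T}} \simeq H]\, d(F, H) \;=\; \sum_{H \in \mathcal{H}_l} d(H,G)\, d(F,H).
\end{equation*}

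For the second identity, I would specialize to $l = m+1$ and $F = K_m$, reducing the task to computing $d(K_m, H)$ for each $H$ on $m+1$ vertices. This is a direct counting step: an $m$-subset $S \subseteq V(H)$ is specified by the single vertex $v = V(H) \setminus S$, and $H\!\upharpoonright_S \simeq K_m$ exactly when every $k$-subset of $V(H)$ avoiding $v$ is an edge, which is equivalent to $v$ lying in every non-edge of $H$. The number of such $v$ is precisely $s(H)$ by definition, with the convention $s(K_{m+1}) = m+1$ correctly handling the vacuous case of no non-edges. Hence $d(K_m, H) = s(H)/(m+1)$, and plugging this into the general identity gives the displayed formula.

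There is no real obstacle here; the writeup only needs to be explicit about the two subtle points, namely that the composite sampling $(\mathbf{T}, \mathbf{S})$ gives a uniform $m$-subset of $V(G)$, and that $d(F, \cdot)$ is well defined on isomorphism classes so the sum over $H \in \mathcal{H}_l$ makes sense.
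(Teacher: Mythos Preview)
Your proposal is correct and matches the paper's own proof essentially line for line: the paper also samples $\mathbf{S}_l \sim \operatorname{Unif}\binom{V(G)}{l}$ then $\mathbf{S}_m \sim \operatorname{Unif}\binom{\mathbf{S}_l}{m}$, applies the law of total probability over the isomorphism type of $G\!\upharpoonright_{\mathbf{S}_l}$, and then notes that $d(K_m, H) = s(H)/(m+1)$ for $H \in \mathcal{H}_{m+1}$. Your writeup is slightly more explicit about why $d(K_m, H) = s(H)/(m+1)$ (identifying the excluded vertex with a point in every non-edge), but the argument is the same.
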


\begin{proof}
Note that a uniform $m$ sized subset of $V(G)$ can be sampled by first choosing $\mathbf{S}_l \sim \operatorname{Uniform}\binom{V(G)}{l}$ and then $\mathbf{S}_m \sim \operatorname{Uniform}\binom{\mathbf{S}_l}{m}$. The claim follows from the law of total probability. The events $\{G \! \upharpoonright_{\mathbb{S}_l} \simeq H \ : \ H \in \mathcal{H}_l\}$ partition the probability space, therefore \begin{equation*}
\begin{split}
    d(F, G) = & \mathbb{P}\Big[ G \! \upharpoonright_{\mathbf{S}_m} \simeq F \Big] \\
    = & \sum_{H \in \mathcal{H}_l} \mathbb{P}\Big[ G \! \upharpoonright_{\mathbf{S}_m} \simeq F \ \Big| \ G \! \upharpoonright_{\mathbf{S}_l} \simeq H \Big] \mathbb{P}\Big[ G \! \upharpoonright_{\mathbf{S}_l} \simeq H \Big] \\
    = & \sum_{H \in \mathcal{H}_l} d(F, H) d(H, G).
\end{split}
\end{equation*}
The special case follows from $d(K_m, H) = \frac{s(H)}{m+1}$ when $H \in \mathcal{H}_{m+1}$.
\end{proof}

In the proof of \cref{main_lemma}, $S_{m-1}$ is chosen uniformly from the possible $m-1$ sized sets. The final claim connects the expected values arising in the terms of $q(S)(p(S_{m-1})-x)^2$ with densities of various $k$-graphs in $G$.

\begin{claim}\label{exp_claim} Suppose $\mathbf{S}_{m-1}$ is chosen uniformly randomly from the set $\binom{V(G)}{m-1}$ then
\leavevmode
\begin{enumerate}
    \item $$\mathbb{E} \left[ q(\mathbf{S}_{m-1})r(\mathbf{S}_{m-1}) \right] = d\left(K_{m}, G\right), $$
    \item $$\mathbb{E} \left[ q(\mathbf{S}_{m-1}) rr(\mathbf{S}_{m-1}) \right] = \sum_{H \in \mathcal{H}_{m+1}} \frac{\binom{s(H)}{2}}{\binom{m+1}{2}} d\left(H, G\right).$$
\end{enumerate}
\end{claim}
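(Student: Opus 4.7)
The approach is straightforward double counting for each part, grouping the relevant triples by the natural $m$-set (part (i)) or $(m{+}1)$-set (part (ii)) that sits behind the event. A useful preliminary observation is that the factor $q(\mathbf{S}_{m-1})$ is essentially redundant: if $l(S) \geq 1$ then any vertex $v$ extending $S$ to a complete $K_m$ forces $S$ itself to be complete, so $q(S) l(S) = l(S)$ and $q(S) \binom{l(S)}{2} = \binom{l(S)}{2}$ (both sides vanishing when $l(S) = 0$).

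For part (i), expanding the expectation gives
\begin{equation*}
\mathbb{E}\!\left[q(\mathbf{S}_{m-1}) r(\mathbf{S}_{m-1})\right] = \frac{1}{(n-m+1)\binom{n}{m-1}} \sum_{S \in \binom{V(G)}{m-1}} l(S),
\end{equation*}
and the right-hand sum counts pairs $(S, v)$ with $v \notin S$ and $S \cup \{v\}$ inducing a $K_m$. Each induced $K_m$-subset of $V(G)$ is counted exactly $m$ times (once per choice of $v$), so the sum equals $m \cdot N_m$ where $N_m$ is the number of induced $K_m$'s; since $d(K_m, G) = N_m/\binom{n}{m}$, the binomial arithmetic collapses to $d(K_m, G)$.

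For part (ii), the same reduction yields
\begin{equation*}
\mathbb{E}\!\left[q(\mathbf{S}_{m-1}) rr(\mathbf{S}_{m-1})\right] = \frac{1}{\binom{n-m+1}{2}\binom{n}{m-1}} \sum_{S} \binom{l(S)}{2},
\end{equation*}
and the sum now enumerates triples $(S, \{v_1, v_2\})$ such that both $S \cup \{v_1\}$ and $S \cup \{v_2\}$ induce a $K_m$. I would group these by the $(m{+}1)$-set $T = S \cup \{v_1, v_2\}$: for fixed $T$ such triples correspond bijectively to unordered pairs of distinct complete $K_m$-subsets of $G \! \upharpoonright_T$, with $S$ recovered as their intersection (which has size $m-1$) and $\{v_1,v_2\} = T \setminus S$. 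Writing $c(T)$ for the number of $K_m$-subsets of $T$, the contribution of $T$ is therefore $\binom{c(T)}{2}$.

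The main step that is not pure bookkeeping is the identification $c(H) = s(H)$ for $H \in \mathcal{H}_{m+1}$. An $m$-subset of $V(H)$ is the complement of a single vertex $v$, and it induces $K_m$ exactly when every non-edge of $H$ is contained in $V(H) \setminus \{v\}$, i.e.\ when $v$ lies in the intersection of all non-edges of $H$; hence $c(H) = |\bigcap(\text{non-edges})| = s(H)$. This matches the stated values $s(K_{m+1}) = m+1$ (every vertex deletion works) and $s(K_{m+1}^-) = k$ (only the vertices of the single non-edge). Regrouping $T$ by isomorphism type,
\begin{equation*}
\sum_{T \in \binom{V(G)}{m+1}} \binom{c(T)}{2} = \binom{n}{m+1} \sum_{H \in \mathcal{H}_{m+1}} \binom{s(H)}{2} d(H, G),
\end{equation*}
and the elementary identity $\binom{n}{m+1}/\bigl(\binom{n-m+1}{2}\binom{n}{m-1}\bigr) = 1/\binom{m+1}{2}$ completes the formula. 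The only genuinely combinatorial step is the $c(H) = s(H)$ bijection; everything else is careful counting plus binomial simplification.
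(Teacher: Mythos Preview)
Your proof is correct and follows essentially the same route as the paper: both arguments group the pairs $(S,v)$ (resp.\ $(S,\{v_1,v_2\})$) by the containing $m$-set (resp.\ $(m+1)$-set); the paper phrases this as the law of total probability over a uniformly sampled nested superset, while you do the equivalent explicit double count with the binomial arithmetic written out. Your explicit verification that the number of $K_m$-subsets of $H\in\mathcal{H}_{m+1}$ equals $s(H)$ is a detail the paper uses in its final line but leaves implicit, so your write-up is slightly more self-contained at that point.
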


\begin{proof}
Similar to the proof of \cref{chain_rule_claim}, first choosing $\mathbf{S}_l \sim \operatorname{Uniform}\binom{V(G)}{l}$ and then $\mathbf{S}_{m-1} \sim \operatorname{Uniform}\binom{\mathbf{S}_l}{m-1}$ results in uniformly distributed $\mathbf{S}_{m-1}$. Note that $r(\mathbf{S}_{m-1})$ and $rr(\mathbf{S}_{m-1})$ corresponds with choosing $1$ and $2$ additional vertices accordingly, and then checking a condition on the extended set. 

In particular, for $\mathbf{S}_m, \mathbf{S}_{m-1}$ pair, let $R\left(\mathbf{S}_m, \mathbf{S}_{m-1}\right)$ be the event that $G \! \upharpoonright_{\mathbf{S}_m} \simeq K_m$ (and therefore $G \! \upharpoonright_{\mathbf{S}} \simeq K_{m-1}$). The claim follows from the law of total expectation, by conditioning on the shape of $G \! \upharpoonright_{\mathbf{S}_m}$
\begin{equation*}
\begin{split}
    & \mathbb{E} \left[ q(\mathbf{S}_{m-1})r(\mathbf{S}_{m-1}) \right] \\
    & \qquad = \sum_{H \in \mathcal{H}_{m}} \mathbb{P}\left[ R(\mathbf{S}_m, \mathbf{S}_{m-1}) \ \large| \ G \! \upharpoonright_{\mathbf{S}_m} = H \right] \mathbb{P}\left[ G \! \upharpoonright_{\mathbf{S}_m} = H \right] \\
    & \qquad = d(K_m, G).
\end{split}
\end{equation*}
Since only $H = K_{m}$ contains a suitable $m-1$ sized subset that satisfies $R_{\mathbf{S}_m, \mathbf{S}_{m-1}}$. 

For the second part, write $RR(\mathbf{S}_{m+1}, \mathbf{S}_{m-1})$ for the event that there are two copies of $K_{m}$ inside $G \! \upharpoonright_{\mathbf{S}_{m+1}}$ intersecting exactly at $\mathbf{S}_{m-1}$ (again this implies $G \! \upharpoonright_{\mathbf{S}_{m-1}} = K_{m-1}$). The calculation in this case gives
\begin{equation*}
\begin{split}
    & \mathbb{E} \left[ q(\mathbf{S}_{m-1})rr(\mathbf{S}_{m-1}) \right] \\
    & \qquad = \sum_{H \in \mathcal{H}_{m+1}} \mathbb{P}\left[ RR(\mathbf{S}_{m+1}, \mathbf{S}_{m-1}) \ \large| \ G \! \upharpoonright_{\mathbf{S}_{m+1}} = H \right] \mathbb{P}\left[ G \! \upharpoonright_{\mathbf{S}_{m+1}} = H \right] \\ 
    & \qquad = \sum_{H \in \mathcal{H}_{m+1}} \frac{\binom{s(H)}{2}}{\binom{m+1}{2}} d\left(H, G\right)
\end{split}
\end{equation*}
since in a given $G \! \upharpoonright_{\mathbf{S}_{m+1}} \simeq H$, a randomly chosen $\mathbf{S}_{m-1}$ satisfies $RR(\mathbf{S}_{m+1}, \mathbf{S}_{m-1})$ with probability $\frac{\binom{s(H)}{2}}{\binom{m+1}{2}}$.
\end{proof}

\begin{remark}\label{flag_claims_remark}

The above claims all correspond to parts of the general flag algebra theory \cite{razb_flag_algebras}.
\begin{enumerate}
    \item \cref{chain_rule_claim} corresponds to the chain rule (Lemma 2.2). In the language of flags it gives $$K_m = \sum_{H \in \mathcal{H}_{m+1}} \frac{s(H)}{m+1} H.$$
    \item \cref{ppp_claim} corresponds to products (Lemma 2.3). It is more or less equivalent with $$p\left(K_m^{T_{m-1}}; G^{T_{m-1}}\right)^2 - p\left(K_m^{T_{m-1}}, K_m^{T_{m-1}}; G^{T_{m-1}}\right) = O(|G|^{-1}).$$
    \item \cref{exp_claim} corresponds to averaging (Theorem 2.5). It is a restatement of \begin{equation*}
         \left\llbracket K_m^{T_{m-1}} \right\rrbracket_{T_{m-1}} = K_m
    \end{equation*}
    and \begin{equation*}
         \left\llbracket \left( K_m^{T_{m-1}} \right)^2 \right\rrbracket_{T_{m-1}} = \sum_{H \in \mathcal{H}_{m+1}} \frac{\binom{s(H)}{2}}{\binom{m+1}{2}} H. 
    \end{equation*}
\end{enumerate} 

\end{remark}

With these claims, the main lemma follows easily. 

\begin{replemma}{main_lemma}
For all $x>0$ and integers $k \leq m < n$, if $G \in \mathcal{H}_n$ then the following holds
\begin{equation*}
    \begin{matrix*}[l]
        0 \geq & \left(- \frac{1 - \frac{k-1}{m}}{x}\right) & d\left(K_{m+1}, G\right) & + \\ & \left(2 - \frac{k-1}{mx} - \frac{1}{(n-m)x}\right) & d\left(K_m, G\right)  & + \\ & (-x) & d\left(K_{m-1}, G\right). &  
    \end{matrix*}
\end{equation*}
\end{replemma}

\begin{proof}

When $x \in \mathbb{R}$ the quantity $q(S)(r(S) - x)^2$ is always non-negative. Therefore choosing $\mathbf{S}_{m-1} \sim \operatorname{Uniform}\binom{V(G)}{m-1}$ the following is true
$$0 \leq \mathbb{E} \left[ q(\mathbf{S}_{m-1}) (r(\mathbf{S}_{m-1}) - x)^2 \right].$$ Expanding the terms and applying \cref{ppp_claim} gives
\begin{equation*}
    0 \leq \mathbb{E} \left[ q(\mathbf{S}_{m-1})rr(\mathbf{S}_{m-1}) +\left(\frac{1}{n-m} - 2x\right)q(\mathbf{S}_{m-1})r(\mathbf{S}_{m-1}) + x^2 q(\mathbf{S}_{m-1}) \right].
\end{equation*}
The substitution from \cref{exp_claim} yields the following expression, without expected values:
\begin{equation*}
    0 \leq \sum_{H \in \mathcal{H}_{m+1}} \frac{\binom{s(H)}{2}}{\binom{m+1}{2}} d\left(H, G\right) + \left(\frac{1}{n-m} - 2x\right) d\left(K_{m}, G\right) + x^2 d\left(K_{m}, G\right).
\end{equation*}
Notice that $s(H)$ is maximal on $K_{m+1}$, otherwise it is at most $k$. This observation gives
\begin{equation*}
    \begin{split}
        0 & \leq \frac{k-1}{m} \sum_{^{H \in \mathcal{H}_{m+1}}_{H \neq K_{m+1}}} \frac{s(H)}{(m+1)} d\left(H, G\right) \\ & \qquad + d\left(K_{m+1}, G\right) \\ & \qquad + \left(\frac{1}{n-m} - 2x\right) d\left(K_{m}, G\right) \\ & \qquad + x^2 d\left(K_{m-1}, G\right).
    \end{split}
\end{equation*}
Finally expanding $\frac{k-1}{m} d(K_{m}, G)$ using \cref{chain_rule_claim} results in
\begin{equation*}
    \begin{matrix*}[l]
        0 \leq & \left(1 - \frac{k-1}{m}\right) & d\left(K_{m+1}, G\right) & + \\ & \left(\frac{k-1}{m} + \frac{1}{(n-m)} - 2x\right) & d\left(K_m, G\right)  & + \\ & x^2 & d\left(K_{m-1}, G\right). &  
    \end{matrix*}
\end{equation*}

Since $x \geq 0$, note that \cref{main_lemma} is a $-1/x$ multiple of the above, and the proof is complete.

\end{proof}

\begin{remark}\label{flag_sdp_remark}
This lemma can be easily stated as \begin{equation*}
    0 \leq  \left(1 - \frac{k-1}{m}\right)  K_{m+1} + \left(\frac{k-1}{m} - 2x\right) K_m + x^2 K_{m-1}  
\end{equation*} in the language of flags. The proof uses the expansion of the simple square
\begin{equation*}
    0 \leq \left\llbracket \left( K_m^{T_{m-1}} - xT_{m-1} \right)^2 \right\rrbracket_{T_{m-1}}
\end{equation*} In a plain application of flag algebra, the computer finds a conic combination of squares, similar to the above expression. \Cref{main_lemma} provides squares in a form that is easy to handle later (they only involve a small number of $d(K_m, G)$ values). The following section shows that the target expression \begin{equation*}
    K_g \leq \prod_{m=k}^g x^{(k)}_{m, r} + c K_r
\end{equation*} for some $c$ constant, lies in the conic combination of the squares.
\end{remark}

\section{The Associated Tridiagonal Matrix}\label{tridiag_section}

\begin{definition}
Given $k < r$ integers, the problem has an associated tridiagonal matrix $D^{(k)}_r$ with entries $d_{l, m}$ indexed by the range $k \leq l, m < r$ \begin{equation*}
    d_{l, m} = \begin{cases}
    -x^{(k)}_{m, r} & \text{  if } \  l=m-1\\
    \\
    2 - \frac{k-1}{mx^{(k)}_{m, r}} & \text{  if } \  l=m \\
    \\
    -\frac{1 - \frac{k-1}{m}}{x^{(k)}_{m, r}} & \text{  if } \  l=m+1\\
    \\
    0 & \text{  otherwise}
\end{cases}
\end{equation*}
\end{definition}

Write $\epsilon = \frac{r-k}{(n-r+1)(k-1)}$ and notice that $D^{(k)}_{r} - \epsilon I$ has column values equal to the coefficients in $E_{m}$. With the new notation, recall \cref{tridiag_lemma}, which was used to finish the proof of \cref{main_theorem}.

\begin{replemma}{tridiag_lemma}
Let $\Delta^{(k)}_{r}(\epsilon) = \left(D^{(k)}_{r}  - \epsilon I \right)^{-1}$ be the inverse of the associated tridiagonal matrix, with entries $\delta_{m, g}(\epsilon)$. If $0 \leq \epsilon < \frac{k-1}{(r-1)(r-k)}$, then the values $\delta_{m, g}(\epsilon)$ are all positive, and $$\delta_{k, g}(\epsilon) \leq \frac{1}{1-\epsilon \frac{(r-1)(r-k)}{k-1}} \prod_{m=k}^g x_{m+1, r}$$
\end{replemma}

Notice that $\epsilon=0$ corresponds with the inverse of $D_r^{(k)}$, and the asymptotic question when $n \rightarrow \infty$. This section is devoted to the proof of \cref{tridiag_lemma}, but it is illuminating and helpful for the proof to first calculate the inverse of $D_r^{(k)}$.

\subsection{The inverse of \texorpdfstring{$D^{(k)}_r$}{Dk,r}}

\begin{lemma}\label{simple_tridiag_lemma}
Let $\Delta^{(k)}_{r} = \left(D^{(k)}_{r}\right)^{-1}$ be the inverse of the associated tridiagonal matrix with entries $\delta_{m, g}$. Then $\delta_{m, g}$ are all positive and $$\delta_{k, g} = \prod_{m=k+1}^{g} x^{(k)}_{m, r}.$$
\end{lemma}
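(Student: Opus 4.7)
Plan. Write $a_l = d_{l,l}$, $b_l = d_{l,l+1} = -x_{l+1,r}$ and $c_l = d_{l+1,l} = -(l-k+1)/(l\, x_{l,r})$ for the diagonal, super- and sub-diagonal entries of $D^{(k)}_r$. The whole argument rests on the algebraic identity
$$(m-k+1)\, x_{m+1, r} \;=\; m\, x_{m, r} - (k-1),$$
which follows from $x_{m,r} = 1 - \binom{m-1}{k-1}/\binom{r-1}{k-1}$ together with the standard relation $(m-k+1)\binom{m}{k-1} = m\binom{m-1}{k-1}$. Rearranging, $b_l c_l = (l-k+1)\, x_{l+1,r}/(l\, x_{l,r}) = 1 - (k-1)/(l\, x_{l,r})$ and hence
$$a_l \;=\; 1 + b_l c_l \qquad \text{for every } l \in \{k,\dots,r-1\};$$
specialising at $l = r-1$ (where $(r-1)\, x_{r-1,r} = k-1$) gives in addition $a_{r-1} = 1$.

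I would then appeal to the classical explicit inverse of a tridiagonal matrix. Writing $\theta_l$, $\phi_l$ for the leading and trailing principal minors of $D$ (with the conventions $\theta_{k-1} = \phi_r = 1$), one has
$$\left(D^{-1}\right)_{l,g} \;=\; \frac{(-1)^{l+g}\, b_l b_{l+1}\cdots b_{g-1}\, \theta_{l-1}\, \phi_{g+1}}{\theta_{r-1}} \qquad (l \le g),$$
together with the obvious mirror formula for $l > g$ involving products of $c$'s. The trailing minors satisfy the backward recurrence $\phi_l = a_l\, \phi_{l+1} - b_l c_l\, \phi_{l+2}$. Starting from $\phi_r = 1$ and $\phi_{r-1} = a_{r-1} = 1$, the identity $a_l - b_l c_l = 1$ propagates this initial data by induction to give $\phi_l = 1$ for every $l \in \{k,\dots,r\}$. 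In particular $\det D = \phi_k = \theta_{r-1} = 1$. Plugging $\theta_{k-1} = \phi_{g+1} = \theta_{r-1} = 1$ and $b_m = -x_{m+1,r}$ into the formula, the sign $(-1)^{k+g}$ is exactly cancelled by the $g-k$ minus signs coming out of the product of $b$'s, and
$$\delta_{k,g} \;=\; \prod_{m=k+1}^{g} x_{m,r}$$
drops out.

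For positivity of every $\delta_{m,g}$ I would examine the LU decomposition $D = LU$. The pivots $\mu_l = \theta_l / \theta_{l-1}$ obey $\mu_l = a_l - (a_{l-1}-1)/\mu_{l-1}$ and, using $a_l \ge 1$ (a direct consequence of $l\, x_{l,r} \ge k-1$), a short induction shows $\mu_l > a_l - 1 \ge 0$: the base case $\mu_k = a_k > a_k-1$ is trivial, and in the inductive step $\mu_{l-1} > a_{l-1}-1 \ge 0$ forces $(a_{l-1}-1)/\mu_{l-1} < 1$, hence $\mu_l > a_l - 1$. This guarantees that every $\theta_l$ is positive, and the tridiagonal inverse formula then gives positivity of an arbitrary $\delta_{m,g}$: the sign $(-1)^{m+g}$ is exactly absorbed by the $|m-g|$ negative off-diagonal entries appearing in the product, while the remaining factors $\theta_{\min(m,g)-1}$ and $\phi_{\max(m,g)+1} = 1$ are positive.

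The main obstacle I anticipate is careful sign bookkeeping in the classical tridiagonal inverse formula; the identity $a_l = 1 + b_l c_l$ is essentially all that is particular to this matrix, and once it is secured both the collapse of the trailing minors to $1$ and the positivity of the LU pivots are immediate, which together close the argument.
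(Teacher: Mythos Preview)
Your proposal is correct and follows essentially the same route as the paper: both hinge on the identity $a_l = 1 + b_l c_l$, use it to show $\phi_l \equiv 1$ (hence $\det D = 1$) via the trailing-minor recurrence, and then read off $\delta_{k,g}$ from the classical tridiagonal inverse formula with the sign absorbed by the product of negative off-diagonal entries. The only cosmetic difference is in the positivity of the $\theta_l$: the paper combines $\phi \equiv 1$ with the determinant identity to get $\theta_m = 1 + b_m c_m\,\theta_{m-1}$ and inducts, whereas you run the equivalent induction through the LU pivots $\mu_l = \theta_l/\theta_{l-1}$.
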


In the following proofs, the $k$ superscripts are omitted to increase readability. The complete inverse can be calculated following the method described in \cite{tridiagonal_inverse}. The value $\theta_m$ represents the determinant of the rows and columns indexed by the set $\{k, k+1, ..., m\}$, while $\phi_m$ is the determinant for the rows and columns indexed by $\{m, m+1, ..., r-1\}$.

From the cofactor calculation of determinants and inverses, the following claim is easy to verify.
\begin{claim}\label{determinant_long_claim}\leavevmode
\begin{enumerate}
    \item For all $k \leq m < r$ the induction $$\theta_m = d_{m, m} \theta_{m-1} - d_{m-1, m} d_{m, m-1} \theta_{m-2}$$ holds with initial values $\theta_{k-1} = 1$ and $\theta_{k-2} = 0$
    \item For all $k \leq m < r$ the reverse induction $$\phi_{m} = d_{m, m} \phi_{m+1} - d_{m+1, m} d_{m, m+1} \phi_{m+2}$$ holds with initial values $\phi_{r} = 1$ and $\phi_{r+1} = 0$
    \item For all $k \leq m < r$ the determinant can be calculated $$\operatorname{Det}\left(D_r\right) = \theta_{r-1} = \phi_{k} = \theta_m \phi_{m+1} - d_{m, m+1} d_{m+1, m} \theta_{m-1} \phi_{m+2}$$
    \item The entries of the inverse matrix are \begin{equation*} 
    \delta_{m, g} =  \frac{(-1)^{m+g+r-k}}{\operatorname{Det}(D_r)}\begin{cases} 
    \theta_{m-1} \phi_{g+1} \prod_{i=m}^{g-1} d_{i, i+1} & \ \text{if } m \leq g \\ 
    \ \\
    \theta_{g-1} \phi_{m+1} \prod_{i=g}^{m-1} d_{i+1, i} & \ \text{otherwise} 
    \end{cases} 
    \end{equation*} In the corresponding $k \leq m, g <r$ range.
\end{enumerate}
\end{claim}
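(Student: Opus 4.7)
The plan is to handle all four parts by cofactor expansions that exploit the tridiagonal structure of $D_r$, so that at every step almost every entry in the relevant row or column is forced to vanish.

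For parts (1) and (2), I would expand the determinant of the top-left (resp.\ bottom-right) sub-block along its outermost row. The last row of the $\{k,\dots,m\}$-block has only two nonzero entries $d_{m,m-1}$ and $d_{m,m}$; the diagonal cofactor contributes $d_{m,m}\theta_{m-1}$ immediately, while the subdiagonal cofactor is itself a matrix whose last column contains only $d_{m-1,m}$, so one more expansion reduces it to $d_{m-1,m}\theta_{m-2}$. The initial values $\theta_{k-1}=1$ and $\theta_{k-2}=0$ are the standard empty-determinant conventions that start the recursion correctly at $\theta_k = d_{k,k}$. Part (2) is the mirror image, expanded along the first row of the bottom-right block. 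For part (3), the flanking equalities $\theta_{r-1}=\operatorname{Det}(D_r)=\phi_k$ are immediate from the definitions, and the three-term splitting identity follows by expanding $\operatorname{Det}(D_r)$ along row $m+1$: its only nonzero entries are $d_{m+1,m}$, $d_{m+1,m+1}$, and $d_{m+1,m+2}$, and the resulting minors each factor (again by the tridiagonal vanishing) into a left $\theta$ times a right $\phi$. Collecting the three terms using the recurrence from (2) collapses the sum to the stated expression.

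The substantive step is (4). By Cramer's rule, $\delta_{m,g} = (-1)^{m+g}M_{g,m}/\operatorname{Det}(D_r)$, where $M_{g,m}$ denotes the minor obtained from $D_r$ by deleting row $g$ and column $m$. I would treat the case $m\leq g$ explicitly; the case $m>g$ is the transpose situation with super- and sub-diagonals swapped. Partition the rows of the reduced matrix into the three intervals $\{k,\dots,m-1\}$, $\{m,\dots,g-1\}$, $\{g+1,\dots,r-1\}$ and the columns into $\{k,\dots,m-1\}$, $\{m+1,\dots,g\}$, $\{g+1,\dots,r-1\}$. Checking the tridiagonal constraint $|i-j|\leq 1$ on every off-diagonal block shows that the three blocks strictly above the diagonal vanish, so the reduced matrix is block lower triangular. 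The top-left diagonal block is precisely the matrix of determinant $\theta_{m-1}$ and the bottom-right block is the one of determinant $\phi_{g+1}$. The middle block, of size $(g-m)\times(g-m)$ and shifted one column to the right of its row index set, has nonzero entries only on or below its own diagonal (once more by the tridiagonal constraint applied in the shifted coordinates), with main diagonal values $d_{m,m+1},d_{m+1,m+2},\dots,d_{g-1,g}$; its determinant is therefore $\prod_{i=m}^{g-1}d_{i,i+1}$. Multiplying the three block determinants gives the claimed factorisation of $M_{g,m}$.

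The main obstacle is the sign bookkeeping, in particular reconciling the $(-1)^{r-k}$ with the $(-1)^{m+g}$ supplied by Cramer's rule. I would fix one global convention for the row and column ordering within the reduced matrix, compute the sign of the permutation required to realise the explicit block lower triangular layout, and combine it with the Cramer sign and with the internal signs of the sub-block determinants. Once this accounting is done consistently, the remaining algebra is routine and falls out of the recurrences already established in parts (1)--(3).
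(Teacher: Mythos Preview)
Your cofactor-expansion plan is precisely what the paper does: it offers no argument beyond the sentence ``from the cofactor calculation of determinants and inverses, the following claim is easy to verify'' together with a citation to the standard reference \cite{tridiagonal_inverse} on tridiagonal inverses, so your block lower-triangular computation of the minor $M_{g,m}$ is already more detailed than what the paper supplies. On your flagged sign obstacle: the block argument you describe correctly produces the Cramer sign $(-1)^{m+g}$, and the extra factor $(-1)^{r-k}$ in the displayed formula is a slip in the paper's statement (indeed the subsequent derivations of $\operatorname{Sign}(\delta_{m,g})$ and of $\delta_{k,g}$ in the proof of \cref{simple_tridiag_lemma} only come out as claimed with the Usmani sign $(-1)^{m+g}$), so there is nothing for you to reconcile.
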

First the $\phi_{m}$ values will be calculated using the recursive expression above.
\begin{claim}\label{phi_claim}
$\phi_{m} = 1$ in the range $k \leq m \leq r$
\end{claim}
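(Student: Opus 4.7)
The plan is to prove $\phi_m = 1$ by downward induction on $m$, using the reverse recursion from part 2 of \cref{determinant_long_claim}. I will treat $m = r$ and $m = r-1$ as base cases, then show that the recursion propagates $\phi_{m+1} = \phi_{m+2} = 1$ to $\phi_m = 1$ for all $k \leq m \leq r-2$. The whole argument reduces to a single algebraic identity between $x_{m, r}$ and $x_{m+1, r}$.

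For the base cases, $\phi_r = 1$ holds by definition. For $m = r-1$, I would compute
\[
x_{r-1, r} = 1 - \frac{\binom{r-2}{k-1}}{\binom{r-1}{k-1}} = 1 - \frac{r-k}{r-1} = \frac{k-1}{r-1},
\]
which gives $d_{r-1, r-1} = 2 - \frac{k-1}{(r-1)\, x_{r-1, r}} = 2 - 1 = 1$. Since $\phi_{r+1} = 0$, the second term in the recursion vanishes and $\phi_{r-1} = d_{r-1, r-1} \cdot \phi_r = 1$.

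For the inductive step with $m \leq r-2$, substituting $\phi_{m+1} = \phi_{m+2} = 1$ together with the matrix entries $d_{m, m} = 2 - \frac{k-1}{m x_{m, r}}$, $d_{m+1, m} = -\frac{m-k+1}{m x_{m, r}}$, and $d_{m, m+1} = -x_{m+1, r}$ into the recursion, the claim $\phi_m = 1$ rearranges to the single identity
\[
m\, x_{m, r} = (k-1) + (m-k+1)\, x_{m+1, r}.
\]
Expanding the definitions of $x_{m, r}$ and $x_{m+1, r}$ and clearing the common denominator $\binom{r-1}{k-1}$, this collapses to the factorial identity $m \binom{m-1}{k-1} = (m-k+1) \binom{m}{k-1}$, since both sides equal $\frac{m!}{(k-1)!(m-k)!}$. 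This closes the induction.

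The main obstacle is purely bookkeeping: keeping the orientations of $d_{m, m+1}$ versus $d_{m+1, m}$ straight, and executing the binomial manipulation without sign or index errors. There is no conceptual difficulty, since the whole claim reduces to a polynomial identity in $m, k, r$ that holds on the nose.
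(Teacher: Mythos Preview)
Your proof is correct and follows essentially the same approach as the paper: reverse induction with the same base cases $\phi_r = \phi_{r-1} = 1$, followed by the recursion from \cref{determinant_long_claim} reducing the inductive step to a single algebraic identity. Your rearrangement to $m\,x_{m,r} = (k-1) + (m-k+1)\,x_{m+1,r}$ and verification via $m\binom{m-1}{k-1} = (m-k+1)\binom{m}{k-1}$ is arguably a bit cleaner than the paper's substitution $u = 1 - x_{m,r}$, but the content is the same.
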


\begin{proof}
By reverse induction. The claim holds for $m=r$ and notice $$\phi_{r-1} = d_{r-1, r-1} = 2 - \frac{\frac{k-1}{r-1}}{1- \frac{\binom{r-2}{k-1}}{\binom{r-1}{k-1}}} = 1$$ 
Using point 2 from \cref{determinant_long_claim} \begin{equation*}
\begin{split}
    \phi_{m} = & d_{m, m} - d_{m+1, m} d_{m, m+1} \\ =& 2 - \frac{k-1}{mx_{m, r}} - \left( 1 - \frac{k-1}{m}\right) \frac{x_{m+1, r}}{x_{m, r}} \\
     =& 2 - \frac{k-1}{m(1-u)} - \left( 1 - \frac{k-1}{m}\right) \frac{1-\frac{m
   u}{m-k+1}}{1-u} \\ =& 1
\end{split}
\end{equation*}
Where $u = \frac{\binom{m-1}{k-1}}{\binom{r-1}{k-1}} = 1-x_{m, r}$ and $u\frac{m}{m-k+1} = \frac{\binom{m}{k-1}}{\binom{r-1}{k-1}}= 1-x_{m+1, r}$ substitution was used to simplify the calculation.
\end{proof}
This gives that $\operatorname{Det}\left(D_r\right) = \theta_{r-1} = \phi_{k} = 1$.

\begin{claim}\label{theta_claim}
In the $k-1 \leq m < r$ range, $\operatorname{Sign}(\theta_m) = 1$
\end{claim}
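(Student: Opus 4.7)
The plan is to exploit the determinant identity from point 3 of \cref{determinant_long_claim} together with the just-computed fact that $\phi_m = 1$ for all $k \leq m \leq r$ and $\det(D_r) = 1$, in order to convert the two-term recurrence for $\theta_m$ into a much simpler one-term recurrence. Specifically, for any $k \leq m \leq r-2$, substituting $\phi_{m+1} = \phi_{m+2} = 1$ into
\begin{equation*}
    \operatorname{Det}(D_r) = \theta_m \phi_{m+1} - d_{m, m+1} d_{m+1, m} \theta_{m-1} \phi_{m+2}
\end{equation*}
collapses to
\begin{equation*}
    \theta_m = 1 + d_{m, m+1} d_{m+1, m} \theta_{m-1},
\end{equation*}
and for $m = r-1$ the same identity (using $\phi_{r+1} = 0$) gives $\theta_{r-1} = 1$ immediately. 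I would also quickly note the base cases $\theta_{k-1} = 1$ and $\theta_{k-2} = 0$ from \cref{determinant_long_claim}.

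Next I would evaluate the coefficient explicitly. From the definition of $D_r$, $d_{m, m+1} = -x_{m+1, r}$ and $d_{m+1, m} = -\frac{1 - \frac{k-1}{m}}{x_{m, r}}$, so their product equals
\begin{equation*}
    d_{m, m+1}\, d_{m+1, m} = \frac{x_{m+1, r}\left(1 - \frac{k-1}{m}\right)}{x_{m, r}}.
\end{equation*}
By \cref{x_positive_claim}, $x_{m, r} > 0$ for $k \leq m \leq r-1$ and $x_{m+1, r} \geq 0$ for $m+1 \leq r$, while $1 - \frac{k-1}{m} \geq 0$ for $m \geq k-1$. Hence this coefficient is non-negative throughout the range.

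Then a simple induction on $m$, starting from $\theta_{k-1} = 1$, gives $\theta_m \geq 1 > 0$ for every $k-1 \leq m \leq r-2$, and we already handled $m = r-1$ separately. This yields $\operatorname{Sign}(\theta_m) = 1$ on the whole range. The main obstacle is really just a bookkeeping one: making sure the endpoint $m = r-1$ is treated correctly, since there the formula for $d_{m, m+1}$ would reference an out-of-range index; the $\phi_{r+1} = 0$ boundary value saves us, which is why I would separate that case from the inductive step rather than try to fold it into a single recurrence.
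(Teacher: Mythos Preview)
Your proof is correct and follows essentially the same route as the paper: both use point~3 of \cref{determinant_long_claim} together with $\phi_{m}=1$ and $\operatorname{Det}(D_r)=1$ to reduce to $\theta_m = 1 + d_{m,m+1}d_{m+1,m}\theta_{m-1}$, then observe the product $d_{m,m+1}d_{m+1,m}\ge 0$ and induct from $\theta_{k-1}=1$. Your explicit separation of the endpoint $m=r-1$ (where $\phi_{r+1}=0$ rather than $1$) is a nice bit of extra care that the paper glosses over.
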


\begin{proof}
By induction, note that the claim holds for $\theta_{k-1} = 1$. Then using point 3 from \cref{determinant_long_claim} and the value for the determinant,
\begin{equation*}
    \theta_m = 1 + d_{m, m+1}d_{m+1, m}\theta_{m-1}
\end{equation*}
Using \cref{x_positive_claim}, note that the values $d_{m, m+1} = -x_{m+1, r}$ and $d_{m+1, m} = -\frac{1 - \frac{k-1}{m}}{x_{m, r}}$ are both negative. Therefore their product; and by induction, $\theta_{m-1}$, are positive.
\end{proof}

\begin{proof}[Proof of \cref{simple_tridiag_lemma}]
It claims two things,
\begin{enumerate}
    \item The entries $\delta_{m, g}$ are all positive: \cref{determinant_long_claim} point 4 gives that \begin{equation*}
        \operatorname{Sign}(\delta_{m, g}) = (-1)^{m+g+r-k} \begin{cases} \prod_{i=m}^{g-1} \operatorname{Sign}(d_{i, i+1}) & \text{if } m \leq g \\ 
        \prod_{i=g}^{m-1} \operatorname{Sign}(d_{i+1, i}) & \text{otherwise.}\end{cases}
    \end{equation*} using $\operatorname{Sign}(\theta_m)=1$ from \cref{theta_claim} and that $\phi_m = 1$ from \cref{phi_claim}. Since all $d_{i, i+1}, d_{i+1, i}$ are negative, the inverse of $D_r$ has only positive entries.
    \item $\delta_{k, g} = \prod_{m=k}^{g} x_{m+1, r}$: Again substituting the values $d_{i, i+1} = -x_{i+1, r}$ and $\operatorname{Det}(D_r) = \phi_{g+1} = \theta_{k-1} = 1$ into \cref{determinant_long_claim} point 4 gives the stated value for $\delta_{k, g}$.
\end{enumerate}

\end{proof}

Interestingly, the values $\delta_{k, g}$ are easy enough to calculate exactly. In contrast, $\delta_{m, g}$ requires the value of some $\theta_m$ which is difficult to find in general with the recursive expression. The sign of $\theta_m$ is easy to find, exactly what is needed for the proof.

\subsection{The inverse of \texorpdfstring{$D^{(k)}_r - \epsilon I$}{Dk,r - eps I}}

Consider the same calculation but with $D_r - \epsilon I$. The value $\phi_m(\epsilon)$ is the determinant for the rows and columns indexed by $\{m, m+1, ..., r-1\}$ of $D_r - \epsilon I$. The next claim shows that $\phi_m(\epsilon)$ is increasing in $m$ when $\epsilon>0$. A notation for the increments will be useful, write $\zeta_m(\epsilon) = \phi_{m+1}(\epsilon) - \phi_m(\epsilon)$.

\begin{claim}
When $k \leq m < r$ and $0 \leq \epsilon \leq \frac{k-1}{(r-1)(r-m)}$, $$0 \leq \zeta_m(\epsilon) \leq \epsilon \frac{r-1}{k-1} \left(1 - \left( 1 - \frac{k-1}{r-1} \right)^{r-m}\right)$$ and correspondingly $$1 \geq \phi_m(\epsilon) \geq 1 - \epsilon \frac{(r-1)(r-m)}{k-1}$$
\end{claim}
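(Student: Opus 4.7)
The plan is to prove both bounds by simultaneous reverse induction on $m$, starting from the boundary values $\phi_r(\epsilon) = 1$ and $\phi_{r-1}(\epsilon) = d_{r-1,r-1} - \epsilon = 1 - \epsilon$, which give $\zeta_{r-1}(\epsilon) = \epsilon$ and already match both claimed inequalities as equalities at $m = r-1$. The driving tool is the three-term recursion from \cref{determinant_long_claim} point 2, combined with the identity $d_{m,m} = 1 + d_{m+1,m}d_{m,m+1}$ that appeared implicitly inside the proof of \cref{phi_claim} (it is just the $\epsilon = 0$ case of that recursion rewritten). Substituting and collecting terms produces the clean update
\begin{equation*}
    \zeta_m(\epsilon) = \alpha_m\,\zeta_{m+1}(\epsilon) + \epsilon\,\phi_{m+1}(\epsilon), \qquad \alpha_m := d_{m+1,m}d_{m,m+1},
\end{equation*}
so the whole problem reduces to controlling the single coefficient $\alpha_m$.

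The key auxiliary fact is the two-sided bound $0 \leq \alpha_m \leq 1 - \frac{k-1}{r-1}$ for $k \leq m \leq r-1$. Writing $u_m = \binom{m-1}{k-1}/\binom{r-1}{k-1}$ and using the elementary identity $u_m = \frac{m-k+1}{m}\,u_{m+1}$, the coefficient simplifies to
\begin{equation*}
    \alpha_m = \frac{(1-\frac{k-1}{m})(1-u_{m+1})}{1-u_m} = \frac{\frac{m-k+1}{m}-u_m}{1-u_m}.
\end{equation*}
From this form both inequalities reduce, after clearing denominators, to the single binomial comparison $\binom{m}{k-1} \leq \binom{r-1}{k-1}$, which is automatic in this range.

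With that bound in hand the inductive step is mechanical. Setting $\beta := 1-\frac{k-1}{r-1}$ and assuming $0 \leq \zeta_{m+1}(\epsilon) \leq \epsilon\frac{r-1}{k-1}(1-\beta^{r-m-1})$ together with $0 \leq \phi_{m+1}(\epsilon) \leq 1$, non-negativity of $\zeta_m$ is immediate from the recursion, while the upper bound follows by plugging in and using $\alpha_m \leq \beta$:
\begin{equation*}
    \zeta_m(\epsilon) \leq \beta\cdot\epsilon\tfrac{r-1}{k-1}(1-\beta^{r-m-1}) + \epsilon = \epsilon\tfrac{r-1}{k-1}\bigl(1-\beta^{r-m}\bigr),
\end{equation*}
where the equality uses $\beta\cdot\frac{r-1}{k-1} = \frac{r-k}{k-1}$. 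The bounds on $\phi_m(\epsilon)$ then follow by telescoping $\phi_m = 1 - \sum_{j=m}^{r-1}\zeta_j$: the upper bound $\phi_m \leq 1$ is immediate from $\zeta_j \geq 0$, and applying the crude bound $\zeta_j \leq \epsilon\frac{r-1}{k-1}$ over $r-m$ summands gives the linear lower bound. The hypothesis $\epsilon \leq \frac{k-1}{(r-1)(r-m)}$ is exactly what is needed to keep $\phi_{m+1}(\epsilon) \geq 0$ and preserve the inductive chain.

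The main obstacle, as I see it, is not any single manipulation but spotting the right quantity to track. The naive attempt is to use $\alpha_m \leq 1$, which only yields linear growth $\zeta_m \leq (r-m)\epsilon$ and destroys the geometric shape needed for \cref{tridiag_lemma}. The content of the argument is recognising that the combinatorial structure of the $x_{m,r}$ values forces the strictly smaller bound $\alpha_m \leq \beta < 1$, and that this bound matches the geometric series in the statement exactly, so the induction closes tightly.
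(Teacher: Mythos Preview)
Your proposal is correct and follows essentially the same approach as the paper: both derive the recursion $\zeta_m(\epsilon)=\alpha_m\zeta_{m+1}(\epsilon)+\epsilon\phi_{m+1}(\epsilon)$ from the identity $d_{m,m}=1+d_{m+1,m}d_{m,m+1}$, bound $\alpha_m$ by $1-\tfrac{k-1}{r-1}$, and close by reverse induction and telescoping. The only cosmetic differences are that the paper obtains $\alpha_m\le 1-\tfrac{k-1}{r-1}$ via the one-line estimate $mx_{m,r}\le r-1$ (equivalent to your route, and in fact that inequality is trivially true here, so strictly speaking only the lower bound $\alpha_m\ge 0$ needs the binomial comparison), and the paper starts the induction at $\zeta_r(\epsilon)=0$ rather than at your cleaner $\zeta_{r-1}(\epsilon)=\epsilon$.
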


\begin{proof}
Use point 2 from \cref{determinant_long_claim}. The initial value is $\zeta_{r}(\epsilon) = 0$ and by reverse induction take 
\begin{equation*}
    \begin{split}
        \phi_m(\epsilon) = & \left( d_{m, m} - \epsilon \right) \phi_{m+1}(\epsilon) - d_{m+1, m} d_{m+1, m} \phi_{m+2}(\epsilon) \\
         = & \left( 2 - \frac{k-1}{mx_{m, r}} - \epsilon \right) \phi_{m+1}(\epsilon) - \left( 1 - \frac{k-1}{mx_{m, r}} \right) \left( \phi_{m+1}(\epsilon) + \zeta_{m+1}(\epsilon) \right) \\
          = & \phi_{m+1}(\epsilon) - \zeta_{m+1}(\epsilon) \left(1 - \frac{k-1}{mx_{m, r}} \right) - y\phi_{m+1}(\epsilon). 
    \end{split}
\end{equation*}
Therefore \begin{equation}\label{zeta_equation}
    \zeta_m(\epsilon) = \zeta_{m+1}(\epsilon) \left(1 - \frac{k-1}{mx_{m, r}} \right) + \epsilon \phi_{m+1}(\epsilon).
\end{equation} 
Note that $$0 \leq d_{m, m+1} d_{m+1, m} = \left(1 - \frac{k-1}{mx_{m, r}} \right) \leq \left(1 - \frac{k-1}{r-1} \right)$$ in the $k \leq m < r$ range. As $\epsilon \leq \frac{k-1}{(r-1)(r-m)} < \frac{k-1}{(r-1)(r-m-1)}$ by reverse induction it holds that $0 \leq \phi_{m+1}(\epsilon)$ and $0 \leq \zeta_{m+1}(\epsilon)$ giving the required lower bound $0 \leq \zeta_{m}(\epsilon)$. This implies the upper bound $\phi_{m}(\epsilon) \leq 1$.

For the $\zeta_m(\epsilon)$ upper bound, in \cref{zeta_equation} bound each term: $m x_{m, r} \leq (r-1)$ and $\phi_{m+1}(\epsilon) \leq 1$. This gives the intermediate result $$\zeta_m(\epsilon) \leq \zeta_{m+1}(\epsilon) \left(1 - \frac{k-1}{r-1} \right) + \epsilon.$$ which, by iterated application and $\zeta_{r}(\epsilon) = 0$ initial value, implies $$\zeta_m(\epsilon) \leq \epsilon \frac{r-1}{k-1} \left(1 - \left( 1 - \frac{k-1}{r-1} \right)^{r-m}\right).$$ A summation formula for the upper and lower $\zeta_m(\epsilon)$ bounds combined with the initial $\phi_{r}(\epsilon) = 1$ value gives $$1 \geq \phi_m(\epsilon)\geq 1 - \epsilon \frac{(r-1)^2}{(k-1)^2} \left( \frac{(k-1)(r-m)}{r-1} + \left(1 - \frac{k-1}{r-1} \right)^{r-m} - 1\right).$$ This provides a tighter bound but for simplicity use $$1 - \epsilon \frac{(r-1)^2}{(k-1)^2} \left( \frac{(k-1)(r-m)}{r-1} + \left(1 - \frac{k-1}{r-1} \right)^{r-m} - 1\right) \geq 1 - \epsilon \frac{(r-1)(r-m)}{k-1}.$$
\end{proof}

This gives a simple linear bound for the determinant. When $0 < \epsilon < \frac{k-1}{(r-1)(r-k)}$, $$1 - \epsilon \frac{(r-1)(r-k)}{k-1} \leq \phi_k(\epsilon) = \operatorname{Det}(D_r - \epsilon I) \leq 1.$$ If $0 \leq \epsilon < \frac{k-1}{(r-1)(r-k)}$ then the determinant is strictly positive, bounding the smallest eigenvalue of $D_r$. 

\begin{proof}[Proof of \cref{tridiag_lemma}]
Again it claims two things.
\begin{enumerate}
    \item The entries $\delta_{m, g}(\epsilon)$ are all positive: By assumption, $\epsilon$ is smaller than the smallest eigenvalue of $D_r$. Therefore the expansion $$\left(D_r - \epsilon I \right)^{-1} = D_r^{-1} + \epsilon D_r^{-2} + \epsilon^2 D_r^{-3}... $$ holds. \Cref{simple_tridiag_lemma} shows that the entries in $D_r^{-1}$ (and in $D_r^{-i}$ for $1 > i$ correspondingly) are all positive, giving the required positivity of $\delta_{m, g}(\epsilon)$.
    \item $$\delta_{k, g}(\epsilon) \leq \frac{1}{1-\epsilon \frac{(r-1)(r-k)}{k-1}} \prod_{m=k}^g x_{m+1, r}:$$ This follows from substituting the bounds $\phi_{m}(\epsilon) \leq 1$ and $1 - \epsilon \frac{(r-1)(r-k)}{k-1} \leq \operatorname{Det}(D_r - \epsilon I)$ into \cref{determinant_long_claim} point 4. 
\end{enumerate}

\end{proof}

\section{\texorpdfstring{$\pi\left(K^{(3)}_4, K^{(3)}_5\right) = 3/8$}{pi K3,4, K3,5 = 3/8}}\label{pi345_section}
The combination of more sophisticated, but still simple squares can provide tight bounds for $\pi\left(K^{(3)}_4, K^{(3)}_5\right) = 3/8$. For an easier description of the flags, consider the complement question. If $E_n$ is the $3$-graph with $n$ vertices and no edges then $$\pi\left(K^{(3)}_4, K^{(3)}_5\right) = \lim_{n \rightarrow \infty} \max \left\{ d(E_4, G) \ : \ G \in \mathcal{H}^{(3)}_n, \ \ d(E_5)=0 \right\}.$$

Use $P_n$ for the corresponding type with $n$ vertices and no edges. Note $E_n^{P_m}$ is unique for any $m<n$ pair. Define the further flags:
\begin{enumerate}
    \item $L^{P_2}_{a}$ is the flag with vertex set $\{0, 1, 2, 3\}$, edge set $\{(0, 2, 3)\}$ and type formed from the vertices $0, 1$. Similarly, write $L^{P_2}_{b}$ for the flag with the same vertex set and type but $\{(1, 2, 3)\}$ edge set. 
    \item Use $M^{P_3}_a$ for the flag with vertices $\{0, 1, 2, 3\}$, edges $\{(1, 2, 3)\}$ and type from $0, 1, 2$. Symmetrically, with the same vertex and type set use $M^{P_3}_b$ for the edge set $\{(0, 2, 3)\}$. And $M^{P_3}_c$ for the edge set $\{(0, 1, 3)\}$.
    \item $N^{Q_4}$ is the flag with vertices $\{0, 1, 2, 3, 4\}$, edges $\{(0, 1, 2)\}$ and type formed by $0, 1, 2, 3$. Note that $Q_4$ does not agree with any of the $T_n$ or $P_n$ types.
    \item $O^{T_4}_{a}$ has vertex set $\{0, 1, 2, 3, 4\}$, edge set $\{(0, 1, 4)\}$ and type formed from $0, 1, 2, 3$. Additionally write $O^{T_4}_{b}$ for the flag with the same vertex and type set but $\{(2, 3, 4)\}$ edges.
\end{enumerate}

\begin{tikzpicture}[scale=0.5]
	\begin{pgfonlayer}{nodelayer}
		\node [style={type_vertex}, label={below:0}] (12) at (0, 2) {};
		\node [style={type_vertex}, label={below:1}] (13) at (2, 2) {};
		\node [style=vertex] (14) at (0, 0) {};
		\node [style=vertex] (15) at (2, 0) {};
		\node [style=none] (16) at (-0.5, 0) {};
		\node [style=none] (17) at (0, -0.5) {};
		\node [style=none] (18) at (2, -0.5) {};
		\node [style=none] (19) at (2.5, 0) {};
		\node [style=none] (20) at (2.5, 2) {};
		\node [style=none] (21) at (2, 2.5) {};
		\node [style=none] (39) at (1, -1.5) {$L_b^{P_2}$};
		\node [style={type_vertex}, label={below:1}] (40) at (8, 2) {};
		\node [style={type_vertex}, label={below:0}] (41) at (6, 2) {};
		\node [style=vertex] (42) at (8, 0) {};
		\node [style=vertex] (43) at (6, 0) {};
		\node [style=none] (44) at (8.5, 0) {};
		\node [style=none] (45) at (8, -0.5) {};
		\node [style=none] (46) at (6, -0.5) {};
		\node [style=none] (47) at (5.5, 0) {};
		\node [style=none] (48) at (5.5, 2) {};
		\node [style=none] (49) at (6, 2.5) {};
		\node [style=none] (50) at (7, -1.5) {$L_a^{P_2}$};
		\node [style={type_vertex}, label={below:0}] (51) at (12, 1) {};
	    \node [style={type_vertex}, label={below:1}] (52) at (12.5, 2.5) {};
		\node [style={type_vertex}, label={below:2}] (53) at (14.5, 2.5) {};
		\node [style={type_vertex}, label={below:3}] (54) at (15, 1) {};
		\node [style=vertex] (55) at (13.5, 0) {};
		\node [style=none] (56) at (13.5, -0.5) {};
		\node [style=none] (57) at (14, 0) {};
		\node [style=none] (58) at (11.5, 1) {};
		\node [style=none] (59) at (12.5, 3) {};
		\node [style=none] (60) at (12, 2.5) {};
		\node [style=none] (61) at (13, 2.5) {};
		\node [style={type_vertex}, label={below:2}] (62) at (12, -7) {};
		\node [style=none] (66) at (11.5, -5) {};
		\node [style=none] (67) at (12, -4.5) {};
		\node [style=none] (68) at (14, -4.5) {};
		\node [style=none] (69) at (14.5, -5) {};
		\node [style=none] (70) at (14.5, -7) {};
		\node [style=none] (71) at (14, -7.5) {};
		\node [style={type_vertex}, label={below:1}] (72) at (8, -5) {};
		\node [style={type_vertex}, label={below:0}] (73) at (6, -5) {};
		\node [style=vertex] (74) at (8, -7) {};
		\node [style=none] (76) at (8.5, -7) {};
		\node [style=none] (77) at (8, -7.5) {};
		\node [style=none] (78) at (6, -7.5) {};
		\node [style=none] (79) at (5.5, -7) {};
		\node [style=none] (80) at (5.5, -5) {};
		\node [style=none] (81) at (6, -4.5) {};
		\node [style={type_vertex}, label={below:0}] (82) at (0, -5) {};
		\node [style={type_vertex}, label={below:1}] (83) at (2, -5) {};
		\node [style=vertex] (85) at (2, -7) {};
		\node [style=none] (86) at (-0.5, -7) {};
		\node [style=none] (87) at (0, -7.5) {};
		\node [style=none] (88) at (2, -7.5) {};
		\node [style=none] (89) at (2.5, -7) {};
		\node [style=none] (90) at (2.5, -5) {};
		\node [style=none] (91) at (2, -4.5) {};
		\node [style={type_vertex}, label={below:0}] (92) at (12, -5) {};
		\node [style={type_vertex}, label={below:1}] (93) at (14, -5) {};
		\node [style={type_vertex}, label={below:2}] (94) at (6, -7) {};
		\node [style={type_vertex}, label={below:2}] (95) at (0, -7) {};
		\node [style=vertex] (96) at (14, -7) {};
		\node [style={type_vertex}, label={below:3}] (97) at (21.5, 1) {};
		\node [style={type_vertex}, label={below:2}] (98) at (21, 2.5) {};
		\node [style={type_vertex}, label={below:1}] (99) at (19, 2.5) {};
		\node [style={type_vertex}, label={below:0}] (100) at (18.5, 1) {};
		\node [style=vertex] (101) at (20, 0) {};
		\node [style=none] (102) at (20, -0.5) {};
		\node [style=none] (103) at (19.5, 0) {};
		\node [style=none] (104) at (22, 1) {};
		\node [style=none] (105) at (21, 3) {};
		\node [style=none] (106) at (21.5, 2.5) {};
		\node [style=none] (107) at (20.5, 2.5) {};
		\node [style={type_vertex}, label={below:3}] (108) at (21.5, -6) {};
		\node [style={type_vertex}, label={below:2}] (109) at (21, -4.5) {};
		\node [style={type_vertex}, label={below:1}] (110) at (19, -4.5) {};
		\node [style={type_vertex}, label={below:0}] (111) at (18.5, -6) {};
		\node [style=vertex] (112) at (20, -7) {};
		\node [style=none] (113) at (18, -6) {};
		\node [style=none] (114) at (18.5, -6.5) {};
		\node [style=none] (115) at (18.5, -4.25) {};
		\node [style=none] (116) at (21.5, -4.5) {};
		\node [style=none] (117) at (21, -4) {};
		\node [style=none] (118) at (21, -5) {};
		\node [style=none] (119) at (13, -1.5) {$O^{T_4}_a$};
		\node [style=none] (120) at (20, -1.5) {$O^{T_4}_b$};
		\node [style=none] (121) at (1, -8.5) {$M^{P_3}_a$};
		\node [style=none] (122) at (7, -8.5) {$M^{P_3}_b$};
		\node [style=none] (123) at (13, -8.5) {$M^{P_3}_c$};
		\node [style=none] (124) at (20, -8.5) {$N^{Q_4}$};
	\end{pgfonlayer}
	\begin{pgfonlayer}{edgelayer}
		\draw [style=edge, in=180, out=-90, looseness=1.25] (16.center) to (17.center);
		\draw [style=edge] (17.center) to (18.center);
		\draw [style=edge, in=-90, out=0] (18.center) to (19.center);
		\draw [style=edge] (19.center) to (20.center);
		\draw [style=edge, in=0, out=90, looseness=1.25] (20.center) to (21.center);
		\draw [style=edge, in=90, out=-180, looseness=0.25] (21.center) to (16.center);
		\draw [style=edge, in=0, out=-90, looseness=1.25] (44.center) to (45.center);
		\draw [style=edge] (45.center) to (46.center);
		\draw [style=edge, in=-90, out=180] (46.center) to (47.center);
		\draw [style=edge] (47.center) to (48.center);
		\draw [style=edge, in=180, out=90, looseness=1.25] (48.center) to (49.center);
		\draw [style=edge, in=90, out=0, looseness=0.25] (49.center) to (44.center);
		\draw [in=-90, out=180, looseness=0.50] (56.center) to (58.center);
		\draw [style=edge, in=-75, out=0, looseness=1.25] (56.center) to (57.center);
		\draw [style=edge, in=0, out=105, looseness=0.75] (61.center) to (59.center);
		\draw [style=edge, in=60, out=-180] (59.center) to (60.center);
		\draw [style=edge, in=90, out=-120, looseness=0.75] (60.center) to (58.center);
		\draw [style=edge] (61.center) to (57.center);
		\draw [style=edge, in=180, out=90, looseness=1.25] (66.center) to (67.center);
		\draw [style=edge] (67.center) to (68.center);
		\draw [style=edge, in=90, out=0] (68.center) to (69.center);
		\draw [style=edge] (69.center) to (70.center);
		\draw [style=edge, in=0, out=-90, looseness=1.25] (70.center) to (71.center);
		\draw [style=edge, in=-90, out=180, looseness=0.25] (71.center) to (66.center);
		\draw [style=edge, in=0, out=-90, looseness=1.25] (76.center) to (77.center);
		\draw [style=edge] (77.center) to (78.center);
		\draw [style=edge, in=-90, out=180] (78.center) to (79.center);
		\draw [style=edge] (79.center) to (80.center);
		\draw [style=edge, in=180, out=90, looseness=1.25] (80.center) to (81.center);
		\draw [style=edge, in=90, out=0, looseness=0.25] (81.center) to (76.center);
		\draw [style=edge, in=-180, out=-90, looseness=1.25] (86.center) to (87.center);
		\draw [style=edge] (87.center) to (88.center);
		\draw [style=edge, in=-90, out=0] (88.center) to (89.center);
		\draw [style=edge] (89.center) to (90.center);
		\draw [style=edge, in=0, out=90, looseness=1.25] (90.center) to (91.center);
		\draw [style=edge, in=90, out=-180, looseness=0.25] (91.center) to (86.center);
		\draw [in=-90, out=0, looseness=0.50] (102.center) to (104.center);
		\draw [style=edge, in=-105, out=180, looseness=1.25] (102.center) to (103.center);
		\draw [style=edge, in=180, out=75, looseness=0.75] (107.center) to (105.center);
		\draw [style=edge, in=120, out=0] (105.center) to (106.center);
		\draw [style=edge, in=90, out=-60, looseness=0.75] (106.center) to (104.center);
		\draw [style=edge] (107.center) to (103.center);
		\draw [in=-135, out=90, looseness=0.75] (113.center) to (115.center);
		\draw [style=edge, in=-150, out=-90] (113.center) to (114.center);
		\draw [style=edge, in=-90, out=30, looseness=0.75] (118.center) to (116.center);
		\draw [style=edge, in=-15, out=90, looseness=0.75] (116.center) to (117.center);
		\draw [style=edge, in=45, out=150, looseness=0.50] (117.center) to (115.center);
		\draw [style=edge] (118.center) to (114.center);
	\end{pgfonlayer}
\end{tikzpicture}

Then the following inequality holds on $E_5$-free hypergraphs:
\begin{equation}\label{pi345_equation}
    \begin{split}
        0 \leq \ \ \frac{2}{3} & \left\llbracket \left( E_3^{P_1} - \frac{3}{4} P_1 \right)^2 \right\rrbracket_{P_1} + 
        \frac{1}{6} \left\llbracket \left( L^{P_2}_a - L^{P_2}_b \right)^2 \right\rrbracket_{P_2} + \\
        \frac{13}{12} & \left\llbracket \left( M^{P_3}_a + M^{P_3}_b + M^{P_3}_c - \frac{1}{2} P_3 \right)^2 \right\rrbracket_{P_3} + 
        \frac{11}{12} \left\llbracket \left( E_4^{P_3} - \frac{1}{2} P_3 \right)^2 \right\rrbracket_{P_3} + \\
        2 & \left\llbracket \left( N^{Q_4} - \frac{1}{2} Q_4 \right)^2 \right\rrbracket_{Q_4} +
        \frac{1}{2} \left\llbracket \left( O^{T_4}_a - O^{T_4}_b \right)^2 \right\rrbracket_{P_4} \leq \quad \frac{3}{8} - E_4.
    \end{split}
\end{equation}

So far the only verification of \cref{pi345_equation} requires a tedious (computer assisted) checking of all the $2102$ hypergraphs in $\mathcal{H}^{(3)}_6$ without $E_5$. This can be found in the supplement. The corresponding lower bound is attained at $G_n = K^{(3)}_{\lfloor n/2 \rfloor} \bigsqcup K^{(3)}_{\lceil n/2 \rceil}$. Note that $d(E_5, G_n)=0$ while $\lim_{n \rightarrow \infty} d(E_4, G_n) = \frac{3}{8}$.

\section{Concluding Remarks}\label{outro_section}

This paper investigated a natural extension of the generalized Turán problem to hypergraphs. The result matches the best-known general bounds for $k$-graphs but fails to provide tight bounds when $k>3$.

The main combinatorial insight comes from the simple inequality \begin{equation}\label{flag_square_equation}
    0 \leq \left\llbracket \left( K_m^{T_{m-1}} - xT_{m-1} \right)^2 \right\rrbracket_{T_{m-1}},
\end{equation} combined with a close approximation of $\left\llbracket \left( K_m^{T_{m-1}} \right)^2 \right\rrbracket_{T_{m-1}}$. As shown in the paper, the convex combination of these squares includes difficult results for the generalized hypergraph Turán problem. 

The long list of questions improved by the plain flag algebraic method indicates that finding more sophisticated squares can greatly improve the available density bounds. It would be interesting to identify other families of simple linear density relations (like the one described in \cref{main_lemma}) whose conic combination includes new bounds for extremal hypergraph problems, even better if the bounds are tight. The provided certificate for $\pi\left(K^{(3)}_4, K^{(3)}_5\right) = 3/8$ can perhaps be generalized to larger cases. It is interesting that for the smallest $k, g, r$ tuple, which is not already known ($k=2$) and is not a classical hypergraph Turán problem ($k=g$), the exact solution follows from flag algebraic calculations. It also highlights the limitations of computer assisted searches: calculations for problems with higher parameters are infeasible.

\subsection{Finding Squares}

There is an easy to describe reason why \cref{flag_square_equation} fails to provide tight bounds for $k$-graphs where $k>2$ but is asymptotically exact when $k=2$. The extremal configuration for $\pi\left(n, K^{(2)}_g, K^{(2)}_r\right)$ is a unique balanced $(r-1)$-partite graph, call it $G_r(n)$, and say $G_r^{T_{m-1}}(n)$ is the same structure with a complete $(m-1)$-tuple marked as a type. Any choice of $T_{m-1}$ results in the same $\lim_{n \rightarrow \infty} d\left(K^{T_{m-1}}_m, G_r^{T_{m-1}}(n)\right)$ value, which is $x^{(2)}_{m, r} = 1 - \frac{m-1}{r-1}$. In contrast, the conjectured optimal constructions when $k>2$ give different values for different $T_{m-1}$ choices, therefore no $x \in \mathbb{R}$ exists with $$\left\llbracket \left( K_m^{T_{m-1}} - xT_{m-1} \right)^2 \right\rrbracket_{T_{m-1}} = 0$$ on the conjectured optimal constructions. This slackness gives the difference between the conjectured optimal constructions and the proved bounds here. The values $x^{(k)}_{m, r}$ are chosen optimally, any asymptotically significant improvement must utilize a different combinatorial insight.

\bibliographystyle{alpha}
\bibliography{readings.bib}
\end{document}